\theoremstyle{plain}
\newtheorem{theorem}{Theorem}  
\newtheorem{proposition}{Proposition}
\newtheorem{corollary}{Corollary}
\newtheorem{lemma}{Lemma}
\theoremstyle{definition}
\newtheorem{remark}{Remark}
\newcommand{\R}{\mathbb{R}}
\newcommand{\N}{\mathbb{N}}
\newcommand{\C}{\mathbb{C}}
\newcommand{\diff}{\mathrm{d}}
\DeclareMathOperator{\Ln}{Ln}
\DeclareMathOperator{\real}{Re}
\DeclareMathOperator{\imag}{Im}
\DeclareMathOperator{\sign}{sgn}
\begin{document}

\title{A novel necessary and sufficient condition for the stability of $2\times 2$ first-order linear hyperbolic systems}

\author{Ismaïla Balogoun\thanks{Université Paris-Saclay, CNRS, CentraleSupélec, Inria, Laboratoire des Signaux et Systèmes, 91190 Gif-sur-Yvette, France} \and Jean Auriol\footnotemark[1] \and Islam Boussaada\footnotemark[1] \thanks{IPSA Paris, 94200 Ivry-sur-Seine, France} \and Guilherme Mazanti\footnotemark[1] \thanks{Fédération de Mathématiques de CentraleSupélec, 91190, Gif-sur-Yvette, France}}

\date{}

\maketitle

\begin{abstract}
In this paper, we establish a necessary and sufficient stability condition for a class of two coupled first-order linear hyperbolic partial differential equations. Through a backstepping transform, the problem is reformulated as a stability problem for an integral difference equation, that is, a difference equation with distributed delay. Building upon a Stépán--Hassard argument variation theorem originally designed for time-delay systems of retarded type, we then introduce a theorem that counts the number of unstable roots of our integral difference equation. This leads to the expected necessary and sufficient stability criterion for the system of first-order linear hyperbolic partial differential equations. Finally, we validate our theoretical findings through simulations.

\bigskip

\noindent\textbf{Keywords.} Hyperbolic partial differential equations, integral difference equations, stability analysis, spectral methods.
\end{abstract}

\section{Introduction} 
Systems of first-order hyperbolic partial differential equations (PDEs) have been extensively studied over the years due to their application in modeling various physical phenomena. These include drilling devices \cite{auriol2020closed,auriol2022comparing}, water management systems \cite{diagne2017control}, aeronomy \cite{schunk1975transport}, cable vibration dynamics \cite{wang2020vibration} and pipelines \cite{rager2015simplified}, traffic networks~\cite{espitia2022traffic}. Comprehensive overviews of current research in this field can be found in  \cite{bastin2016stability} and \cite{hayat2021boundary}.

When there are no in-domain coupling terms, the method of characteristics can be used to relate the behavior of these systems to time-delay systems, a link extensively explored in the literature \cite{Chitour2024Approximate, Chitour2021One, Chitour2016Stability, cooke1968differential, slemrod1971nonexistence}. The situation becomes more complicated when in-domain coupling terms are present. To overcome this difficulty, a possible strategy, adopted for instance in \cite{saba2019stability, auriol2019explicit, auriol2019sensing}, is to make use of the backstepping transform for hyperbolic systems from \cite{krstic2008boundary} to convert these coupling terms into integral boundary terms, and then use the method of characteristics to relate the behavior of these systems to integral difference equations.

Control questions concerning time-delay systems present intriguing and complex mathematical challenges. For linear time-invariant systems, stability and stabilization issues can be addressed through spectral methods, as detailed in \cite{hale1993introduction, michiels2014stability}. Similarly to linear time-invariant finite-dimensional systems, the stability of time-delay systems can be characterized through the position of its spectrum with respect to the imaginary axis. For instance, exponential stability is equivalent to the existence of $\alpha>0$ such that $\real s \leq-\alpha$ for every $s$ in the spectrum of the time-delay system.

The spectrum of systems with finitely many discrete delays is made only of eigenvalues, which are infinite in number and can be characterized as the complex roots of a quasipolynomial. Quasipolynomials were extensively studied in the literature, and characterizing the location of their roots is a challenging problem that has attracted much research effort, both from theoretical and numerical points of view \cite{avellar1980zeros, berenstein2012complex, vyhlidal2014qpmr, stepan1989retarded, hassard1997counting}. Important insights on the stability of difference equations with finitely many discrete delays, including a discussion of robustness with respect to the delays, can be found in \cite[Chapter~9, Section~6]{hale1993introduction}, with extensions to systems with time-varying parameters provided in \cite{Chitour2016Stability}. One can also obtain necessary or sufficient stability conditions using Lyapunov--Krasovskii functionals, as done in \cite{campos2018necessary} for systems with finitely many discrete delays and in \cite{ortiz2022necessary} for integral difference equations.

Cauchy's argument principle, which is a standard result in complex analysis, turns out to be an efficient way to investigate the stability of delay systems. It is used, in particular, in the St\'{e}p\'{a}n--Hassard argument variation approach, which allows one to count the number of eigenvalues with positive real parts. The corresponding counting formula, which was first introduced in \cite{stepan1989retarded} and then refined in \cite{hassard1997counting}, remains relevant not only for stability purposes but also for recent developments in the stabilization of time-delay systems, through the so-called \emph{partial pole placement} method. Indeed, this method consists in selecting the system parameters in order to enforce a finite number of prescribed eigenvalues of the system, and the dominance of these assigned eigenvalues is often shown by exploiting the St\'{e}p\'{a}n--Hassard counting formula, with stabilization being achieved when all the chosen eigenvalues have negative real parts \cite{boussaada2016characterizing, bedouhene2020real, Fueyo2023Pole, Boussaada2022Generic}. While the partial pole placement method has shown its effectiveness in the prescribed stabilization of scalar first-order hyperbolic partial differential equations as discussed in \cite{Boussaada2022Generic,ammari:hal-04200203,benarab:hal-04196450, schmoderer:hal-04194365}, the application of the underlying Hille oscillation Theorem  \cite{hille1922} appears to be computationally cumbersome when dealing with systems of coupled hyperbolic partial differential equations.  

In this paper, we analyze the stability of a class of $2\times 2$ linear hyperbolic coupled PDEs. We use the backstepping transformation from \cite{auriol2018delay} to transform the original system into a target system that can be written as an integral difference equation. Then, we address the stability of the integral difference equation through spectral methods. Although the equation is not of the retarded type, we show that St\'{e}p\'{a}n--Hassard arguments from \cite{stepan1989retarded, hassard1997counting} can be adapted to count the number of roots with strictly positive real parts, assuming there are no roots on the imaginary axis. Combined with an analysis of the vertical asymptotes of the roots of the characteristic function, we obtain, as a consequence, necessary and sufficient conditions for the stability of the difference equation in question, which will yield the same results for the original hyperbolic system.

This paper is organized as follows. Section~\ref{sec_description} presents the system under consideration and recalls the results of \cite{saba2019stability} transforming our system into an integral difference equation. Section~\ref{sec_main} contains our main results: Theorem~\ref{thm_open} uses the St\'{e}p\'{a}n--Hassard approach to count the number of unstable roots of our integral difference equation, Corollary~\ref{coro:stability} obtains as a consequence necessary and sufficient conditions for exponential stability, and Corollary~\ref{result:eqhyp} uses the relation between the integral difference equation and our original system in order to obtain necessary and sufficient conditions for exponential stability of the latter. A comparison between our results and some other results available in the literature is provided in Section~\ref{sec_comparaison}, while, in Section~\ref{sec_approximation}, we focus on the case where the in-domain coupling terms are constant and provide some numerical insights based on polynomial approximations of the integral kernel of the integral difference equation. The paper is concluded by final remarks and perspectives provided in Section~\ref{sec_conclusion}.

\paragraph*{Notation} In this paper, the principal branch of the complex logarithm is denoted by $\Ln$, and the principal argument of a complex number is denoted $\arg$. Given a measurable subset $\Omega$ of $\mathbb{R}$ with positive measure, $L^2(\Omega,\R)$ denotes the set of (Lebesgue) measurable functions $f$ mapping the set $\Omega$ into $\mathbb{R}$ such that $\int_\Omega \lvert f(x) \rvert^2 \diff x < +\infty$, identifying functions which are equal almost everywhere. The associated norm is $\lVert f\rVert_{L^2(\Omega)}^2:= \int_\Omega \lvert f(x)\rvert^2 \diff x$. The Sobolev space $H^{1}(\Omega,\R)$ is defined as the set $\lbrace f \in L^2(\Omega,\R)\mid f^\prime\in L^2(\Omega,\R)\rbrace$, where the derivative is to be understood in the sense of distributions. Given a delay $\tau > 0$, a function $z\colon [-\tau, \infty) \mapsto \mathbb{R}$, and $t \geq 0$, the history function of $z$ at time $t$ is the function $z_{[t]} \colon [- \tau, 0] \to \mathbb R$ defined by $z_{[t]}(\theta) = z(t + \theta)$ for $\theta \in [-\tau, 0]$. For a set $I$, $\mathds{1}_I(x)$ is the function defined by
\[
\mathds{1}_I(x)
=
\begin{cases}
1 & \text { if } x \in I, \\
0 & \text { otherwise}.
\end{cases}
\]

\section{Problem description}
\label{sec_description}

\subsection{System under consideration}
We are interested in the stability analysis of the linear hyperbolic system
\begin{equation} \label{eq:hyperbolic_couple}
\left\{
\begin{aligned}
&u_t(t, x)+\lambda u_x(t, x)=\sigma^{+}(x) v(t, x),~\,t>0,~ x \in[0,1], \\
&v_t(t, x)-\mu v_x(t, x)=\sigma^{-}(x) u(t, x),\,~t>0,~ x \in[0,1],\\
&u(t, 0)=q v(t, 0),\,~t>0,\\
&v(t, 1)=\rho u(t, 1),\,~t>0,
\end{aligned}
\right.
\end{equation}
where $(u(t,x), v(t,x))^T$ is the state of the system, the different arguments evolving in $\{(t,x) \mid t>0,~ x \in [0,1] \}$. The in-domain coupling terms $\sigma^{+}$ and $\sigma^{-}$ are assumed to be continuous functions, whereas the boundary coupling terms $q$ and $\rho$ and the velocities $\lambda>0$ and $\mu>0$ are assumed to be constant. We denote by $u_0(\cdot)=u(0,\cdot)$ and $v_0(\cdot)=v(0,\cdot)$ the initial conditions associated with~\eqref{eq:hyperbolic_couple}. We assume here that they belong to $H^1((0,1),\mathbb{R})$ and satisfy the compatibility conditions 
\begin{align}
u_0(0)=qv_0(0),\quad v_0(1)=\rho u_0(1). \label{compatibility_condition_u_v}
\end{align}
As shown in \cite[Appendix~A]{bastin2016stability}, system~\eqref{eq:hyperbolic_couple} with initial condition $(u_0,v_0)$ in $H^1((0, 1), \mathbb R^2)$ satisfying the compatibility condition~\eqref{compatibility_condition_u_v} is well-posed, and its solution $(u, v)$ belongs to the space $C^1([0, +\infty),\allowbreak L^2((0, 1), \mathbb R^2)) \cap C^0([0, +\infty),\allowbreak H^1((0, 1), \mathbb R^2))$.
In the sequel, we define the characteristic time $\tau$ of the system as
\begin{equation}\label{tau}
\tau=\frac{1}{\lambda}+\frac{1}{\mu}.
\end{equation}
Finally, we assume $q\neq 0$. Although the computations can be adjusted to deal with the case $q = 0$, we make this simplifying assumption here for the sake of clarity of presentation, and we direct the reader to \cite[Section~3.5]{coron2013local} for the scenario where $q=0$.

\subsection{Objective and methodology}

Our objective is to construct necessary and sufficient stability conditions that guarantee the exponential stability of system~\eqref{eq:hyperbolic_couple} in $L^2$ norm, namely, the existence of $\nu>0$ and $C\geq 0$ such that, for any $(u_0,v_0) \in H^1([0,1],\mathbb{R})\times H^1([0,1],\mathbb{R})$ satisfying the compatibility condition~\eqref{compatibility_condition_u_v}, the solution $(u,v)$ of system~\eqref{eq:hyperbolic_couple}  satisfies
\begin{align}
\lVert(u(t,\cdot),v(t,\cdot))\rVert_{(L^2(0,1))^2} \leq C\mathrm{e}^{-\nu t} \lVert (u_0,v_0) \rVert_{(L^2(0,1))^2},~t\geq 0. 
\end{align}

As explained in the introduction, stability conditions for systems of conservation and balance laws can be found in the literature~\cite{bastin2016stability}. Most of the existing results are based on (weighted $L^2$) Lyapunov functions and linear matrix inequalities (LMIs) and are, therefore, sufficient only conditions. It has been shown in~\cite{auriol2019explicit} that systems of first-order hyperbolic PDEs share equivalent stability properties to those of a class of integral difference equations (IDEs). This representation has been successfully used in~\cite{saba2019stability} to obtain a new stability condition. However, the proposed condition could not be easily verified and had to be relaxed to a sufficient-only condition to be implemented. In this paper, we use the same time-delay system framework to characterize the unstable roots of the system~\eqref{eq:hyperbolic_couple} and obtain \emph{implementable} necessary and sufficient stability conditions.

\subsection{Equivalent integral difference equation}

In this section, we adopt the approach presented in~\cite{saba2019stability, auriol2019explicit} to rewrite the PDE system~\eqref{eq:hyperbolic_couple} as an IDE with equivalent stability properties. To do so, we use a classical backstepping transformation. The detailed computations can be found in~\cite{saba2019stability} and we only recall here the main results that will be of use to us in the sequel.

Let us consider the Volterra change of coordinates defined in \cite{coron2013local}, given by
\begin{equation}\label{backstepping}
\begin{aligned}
\alpha(t, x) & =u(t, x)-\int_0^x\left(K^{u u}(x, \xi) u(t, \xi)+K^{u v}(x, \xi) v(t, \xi)\right) \diff \xi, \\
\beta(t, x) & =v(t, x)-\int_0^x\left(K^{v u}(x, \xi) u(t, \xi)+K^{v v}(x, \xi) v(t, \xi)\right) \diff \xi,
\end{aligned}
\end{equation}
where the kernels $K^{u u}, K^{u v}, K^{v u}, K^{v v}$ are defined on the triangular domain $\mathcal{T}=\{(x, \xi) \in [0,1]^2 \mid \xi \leq x\}$. They are bounded continuous functions defined by a set of hyperbolic PDEs given in~\cite{coron2013local}. The Volterra backstepping transformation \eqref{backstepping} is invertible~\cite{yoshida1960lectures} and the inverse transformation can be expressed as
\begin{equation}\label{backstepping_inverse}
\begin{aligned}
u(t, x) & = \alpha(t, x)+\int_0^x\left(L^{\alpha \alpha}(x, \xi) \alpha(t, \xi)+L^{\alpha \beta}(x, \xi) \beta(t, \xi)\right) \diff \xi, \\
v(t, x) & = \beta(t, x)+\int_0^x\left(L^{\beta \alpha}(x, \xi) \alpha(t, \xi)+L^{\beta \beta}(x, \xi) \beta(t, \xi)\right) \diff \xi,
\end{aligned}
\end{equation}
where the kernels $L^{\alpha \alpha}, L^{\alpha \beta}, L^{\beta a}$, and $L^{\beta \beta}$ are bounded continuous functions defined on $\mathcal{T}$. The dynamics of the system in the new coordinates are
\begin{equation}\label{new coordinates}
\left\{
\begin{aligned}
\alpha_t(t, x)+\lambda \alpha_x(t, x) & =0, \\
\beta_t(t, x)-\mu \beta_x(t, x) & =0,
\end{aligned}
\right.
\end{equation}
with boundary conditions
\begin{equation}\label{boundary}
\left\{
\begin{aligned}
\alpha(t, 0) & = q \beta(t, 0), \\
\beta(t, 1) & = \rho \alpha(t, 1) + \int_0^1 \left(N^\alpha(\xi) \alpha(t, \xi)+N^\beta(\xi) \beta(t, \xi) \right)\diff \xi,
\end{aligned}
\right.
\end{equation}
with
\begin{equation}
\begin{aligned}
N^\alpha(\xi) & = \rho L^{\alpha \alpha}(1, \xi) - L^{\beta \alpha}(1, \xi), \\
N^\beta(\xi) & = \rho L^{\alpha \beta}(1, \xi) - L^{\beta \beta}(1, \xi). 
\end{aligned}
\end{equation}

Using the method of characteristics on  \eqref{new coordinates} yields, for all $x \in[0,1]$ and $t>\tau$,
\begin{equation}
\label{eq:alpha-beta-characteristics}
\alpha(t, x)=q \beta\left(t-\frac{x}{\lambda}-\frac{1}{\mu}, 1\right), \quad \beta(t, x)=\beta\left(t-\frac{1-x}{\mu}, 1\right).
\end{equation}
Consequently, combining this with the boundary conditions \eqref{boundary}, we get
\begin{equation}\label{distributed delay}
\beta(t, 1)=q \rho \beta(t-\tau, 1) + \int_0^\tau N(\nu) \beta(t-\nu, 1) \diff \nu,
\end{equation}
where $\tau$ is defined by \eqref{tau} and $N$ is defined by
\begin{equation}\label{N}
N(\nu)=q \lambda N^\alpha\left(\lambda \nu-\frac{\lambda}{\mu}\right) \mathds{1}_{\left[\frac{1}{\mu}, \tau\right]}(\nu)+\mu N^\beta(1-\mu \nu) \mathds{1}_{\left[0, \frac{1}{\mu}\right]}(\nu).
\end{equation}

Consequently, $z(t)=\beta(t,1)$ is the solution of an IDE. Note also that, since the solution $(u, v)$ of \eqref{eq:hyperbolic_couple} belongs to $C^0([0, +\infty), H^1((0, 1), \mathbb R^2))$, the same is also true for the pair $(\alpha, \beta)$ defined in \eqref{backstepping}, thanks to the equations satisfied by the kernels $K^{u u}, K^{u v}, K^{v u}, K^{v v}$ from \cite[(3.30)--(3.37)]{coron2013local} and the regularity of these functions. Now, from \eqref{eq:alpha-beta-characteristics}, we have that $\beta(t-h, 1) = \beta(t, 1 - \mu h)$ for every $(t, h)$ with $0 \leq h \leq \frac{1}{\mu}$ and $t \geq h$, and it thus follows that $\beta(\cdot, 1) \in H^1((t - \frac{1}{\mu}, t), \mathbb R)$ for every $t \geq \frac{1}{\mu}$, which yields that $z \in H^1_{\mathrm{loc}}([0, +\infty), \mathbb R)$.

The following theorem, whose proof can be found in \cite[Theorem~6.1.3]{auriol2024contributions} or in~\cite{redaud2024domain}, shows how the $L^2$ stability properties of~$z$ relate to those of~$(\alpha, \beta)$ (and consequently to those of~$(u,v)$). 
\begin{theorem} \label{theorem_equiv_norm}
There exist two positive constants~$\kappa_0$ and~$\kappa_1$ such that, for every~$t>\tau$,
\begin{equation}
\label{eq_ineq_norm}
 \kappa_0 \lVert z_{[t]} \rVert^2_{L^2(-\frac{1}{\lambda},0)} \leq \lVert (\alpha(t,\cdot), \beta(t,\cdot)) \rVert^2_{(L^2(0,1))^2} \leq  \kappa_1 \lVert z_{[t]} \rVert^2_{L^2(-\tau,0)}.
\end{equation}
Moreover, the exponential stability of~$z_{[t]}$ in the sense of the~$L^2(-\tau, 0)$ norm is equivalent to the exponential stability of~$(\alpha,\beta)$ (or equivalently to $(u,v)$) in the sense of the~$L^2$ norm.
\end{theorem}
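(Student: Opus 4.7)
The plan is to exploit the explicit characteristic formulas \eqref{eq:alpha-beta-characteristics}: for $t > \tau$ and every $x \in [0, 1]$ they give $\alpha(t, x) = q\, z(t - x/\lambda - 1/\mu)$ and $\beta(t, x) = z(t - (1-x)/\mu)$, where $z(t) = \beta(t, 1)$. First I would compute the two spatial $L^2$-norms by a change of variable: substituting $\theta = -x/\lambda - 1/\mu$ in the first integral and $\theta = -(1-x)/\mu$ in the second yields
\begin{equation*}
\|\alpha(t, \cdot)\|_{L^2(0,1)}^2 = q^2 \lambda \int_{-\tau}^{-1/\mu} z_{[t]}(\theta)^2 \,\diff\theta, \qquad \|\beta(t, \cdot)\|_{L^2(0,1)}^2 = \mu \int_{-1/\mu}^{0} z_{[t]}(\theta)^2 \,\diff\theta.
\end{equation*}
Summing gives the key identity
\begin{equation*}
\|(\alpha(t, \cdot), \beta(t, \cdot))\|_{(L^2(0,1))^2}^2 = q^2 \lambda \int_{-\tau}^{-1/\mu} z_{[t]}(\theta)^2 \,\diff\theta + \mu \int_{-1/\mu}^{0} z_{[t]}(\theta)^2 \,\diff\theta,
\end{equation*}
in which the two intervals of integration partition $[-\tau, 0]$.

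From this identity, the inequalities in \eqref{eq_ineq_norm} essentially read themselves off. The upper bound holds with $\kappa_1 := \max(q^2 \lambda, \mu)$, while for the lower bound I would write
\begin{equation*}
\|(\alpha(t, \cdot), \beta(t, \cdot))\|^2 \geq \min(q^2 \lambda, \mu) \int_{-\tau}^{0} z_{[t]}(\theta)^2 \,\diff\theta \geq \min(q^2 \lambda, \mu) \int_{-1/\lambda}^{0} z_{[t]}(\theta)^2 \,\diff\theta,
\end{equation*}
using $[-1/\lambda, 0] \subset [-\tau, 0]$; the resulting $\kappa_0 := \min(q^2 \lambda, \mu)$ is positive precisely because of the standing assumption $q \neq 0$.

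For the equivalence of exponential stability, the upper bound directly transfers $L^2(-\tau, 0)$-exponential decay of $z_{[t]}$ to $L^2$-exponential decay of $(\alpha(t,\cdot), \beta(t,\cdot))$ for $t > \tau$; the initial interval $t \in [0, \tau]$ is controlled by the well-posedness result quoted after \eqref{compatibility_condition_u_v}. Conversely, the lower bound only yields $L^2(-1/\lambda, 0)$-exponential decay of $z_{[t]}$, which I would upgrade to $L^2(-\tau, 0)$-exponential decay by covering $[-\tau, 0]$ with finitely many time-shifted windows of length $1/\lambda$, namely writing
\begin{equation*}
\|z_{[t]}\|^2_{L^2(-\tau, 0)} \leq \sum_{k=0}^{K-1} \|z_{[t - k/\lambda]}\|^2_{L^2(-1/\lambda, 0)},
\end{equation*}
with $K$ the smallest integer satisfying $K/\lambda \geq \tau$; each summand inherits the same exponential rate, so the whole sum does as well. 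Finally, the equivalence with $(u, v)$ follows from the boundedness and invertibility of the backstepping transformation \eqref{backstepping}--\eqref{backstepping_inverse}, which yield a two-sided $L^2$-norm equivalence between $(u, v)$ and $(\alpha, \beta)$ with constants independent of $t$.

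The whole argument is essentially a computation, and I do not anticipate any serious obstacle. The two points requiring care are the restriction $t > \tau$, needed for the characteristic formulas to hold throughout $[0, 1]$ (well-posedness on the transient layer takes care of smaller times), and the fact that the positivity of $\kappa_0$ genuinely relies on $q \neq 0$, which is precisely why this case, rather than $q = 0$, is treated here.
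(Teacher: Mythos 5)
Your argument is correct and is essentially the one the paper relies on: the paper does not reproduce a proof of this theorem (it defers to external references), but the characteristic representation \eqref{eq:alpha-beta-characteristics}, derived immediately before the statement, is exactly what your change of variables exploits, and the resulting identity $\lVert(\alpha(t,\cdot),\beta(t,\cdot))\rVert^2 = q^2\lambda\int_{-\tau}^{-1/\mu} z_{[t]}^2 + \mu\int_{-1/\mu}^{0} z_{[t]}^2$ yields both inequalities and the stability equivalence as you describe. Two minor observations: since $q\neq 0$ your identity already gives the lower bound with the full $L^2(-\tau,0)$ norm, so the window-covering step is redundant (it would only be needed to handle $q=0$, where the surviving window is $[-1/\mu,0]$ rather than $[-1/\lambda,0]$); and the transfer of the decay estimate back to the initial data across the transient layer $t\in[0,\tau]$, which you dispatch by invoking well-posedness, is the only place where more than a computation is needed.
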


The fact that the norms are different on the two sides of \eqref{eq_ineq_norm} is related to the structure of the difference equation (see, for instance, the design of converse Lyapunov--Krasovskii functions~\cite{pepe2013converse}). The system~\eqref{distributed delay} can be seen as a \emph{comparison system} for the PDE system~\eqref{eq:hyperbolic_couple} (see, e.g.,~\cite{niculescu2001delay} and the references therein). In the rest of the paper, we will focus our stability analysis on the IDE~\eqref{distributed delay}.

\section{Main results}
\label{sec_main}

\subsection{Stability conditions for difference equation with distributed delay}

In light of the results presented in the previous section, we now focus on the stability analysis of the IDE
\begin{equation}\label{distributed delay0}
    z(t) = \xi z(t - \tau) + \int_0^\tau N(\nu) z(t - \nu) \diff\nu,
\end{equation}
where $\tau$ is a positive known delay, $\xi \in \R$, $N\colon [0, \tau] \to \mathbb R$ is an integrable function, and the unknown function is $z\colon [-\tau, +\infty) \to \mathbb R$. Even though the analysis of \eqref{distributed delay0} is motivated in this paper through its link with \eqref{eq:hyperbolic_couple}, we highlight that the stability analysis of IDEs has an interest on itself and in connection with more general time-delay systems (see, e.g., \cite[Chapter~9]{hale1993introduction}).

We assume that the initial data $z_{[0]} = z^0$ of $z$ is known, belongs to the space $H^1([-\tau,0],\mathbb{R})$, and verifies the compatibility condition $z^0(0)=\xi z^0(-\tau)+\int_0^\tau N(\nu)z^0(-\nu)\diff\nu$. A function~$z\colon [-\tau, \infty) \rightarrow \mathbb{R}$ is called a \emph{solution} of the IDE~\eqref{distributed delay0} with initial condition $z^0$ if~$z_{[0]} = z^0$ and if equation~\eqref{distributed delay0} is satisfied for every~$t \geq 0$. We will also assume here that
\begin{equation} \label{q_rho}
\lvert  \xi\rvert <1.
\end{equation}
This assumption is motivated by the fact that \eqref{distributed delay0} cannot be exponentially stable if $\lvert \xi\rvert>1$ \cite{henry1974linear,auriol2023robustification}, and amounts to assuming that the \emph{principal part} of the system~\eqref{distributed delay0} (that is, \eqref{distributed delay0} without the integral term corresponding to the distributed delay) is exponentially stable. However, due to the distributed delay term, system~\eqref{distributed delay0} may be unstable even under \eqref{q_rho}.

We analyze the stability properties of \eqref{distributed delay0} through spectral methods. Its characteristic function is the function $\Delta\colon \mathbb C \to \mathbb C$ defined by
\begin{equation}\label{characteristic equation}
\Delta(s) = 1-\xi \mathrm{e}^{-s \tau}-\int_0^\tau N(\nu) \mathrm{e}^{-s \nu} \diff \nu=0.
\end{equation}
The next result shows how the properties of the function~$\Delta$ relate to the stability properties of the IDE~\eqref{distributed delay0}.

\begin{lemma}[{\cite[Chapter~9, Theorem~3.5]{hale1993introduction}, \cite{henry1974linear}}]
The IDE~\eqref{distributed delay0} is ex\-po\-nen\-tially stable in $L^2$ norm if and only if there exists $\eta>0$ such that all solutions $s$ of the characteristic equation~\eqref{characteristic equation} satisfy $\real(s) \leq -\eta$.
\end{lemma}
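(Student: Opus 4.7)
The plan is to establish both implications via the Laplace transform of the solution of~\eqref{distributed delay0}. Applying the transform formally yields the algebraic identity
\[
\Delta(s)\,\widehat{z}(s) = G(s; z^0),
\]
where $G(s;z^0)$ is an entire function of $s$ depending linearly on the initial data $z^0$, arising from the history contributions of the term $\xi z(t-\tau)$ and of the convolution against $N$. Consequently, $\widehat{z}(s) = G(s; z^0)/\Delta(s)$ is meromorphic with possible poles only at the zeros of $\Delta$, so everything reduces to controlling those zeros.

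For the necessity direction, I would argue by contraposition using modal solutions. If $s_0$ is any zero of $\Delta$, then $z(t) = e^{s_0 t}$ solves~\eqref{distributed delay0} on $[-\tau, \infty)$, its restriction $z^0(\theta) = e^{s_0\theta}$ belongs to $H^1([-\tau, 0], \mathbb{C})$, and the compatibility condition coincides with $\Delta(s_0) = 0$. Taking real or imaginary parts yields a genuine real-valued solution whose growth rate equals $\real s_0$, so uniform exponential decay at some rate $\nu>0$ forces $\real s_0 \leq -\nu$ for every zero $s_0$; one then takes $\eta = \nu$.

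For the sufficiency direction, assume all zeros of $\Delta$ lie in $\{\real s \leq -\eta\}$. The strategy is to represent $z$ by the inverse Laplace transform along a vertical line $\real s = -\eta' + \varepsilon$ with $0 < \eta' \leq \eta$ and $\varepsilon>0$ small, and to read off the decay $\lVert z_{[t]}\rVert_{L^2} \leq C\,e^{(-\eta'+\varepsilon)t}\lVert z^0\rVert_{H^1}$ from Plancherel on that line. Shifting the contour is justified since $z$ has at most exponential growth, obtained by iterating~\eqref{distributed delay0} on successive intervals of length $\tau$.

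The main obstacle is to establish a uniform positive lower bound on $|\Delta(s)|$ along the vertical line, and this is exactly where the standing assumption $|\xi|<1$ is crucial. On the contour we have $|\xi e^{-s\tau}| = |\xi|\,e^{(\eta'-\varepsilon)\tau}$, which can be kept strictly below $1$ by choosing $\eta'$ sufficiently small; and $\int_0^\tau N(\nu)e^{-s\nu}\diff\nu \to 0$ as $|\imag s|\to\infty$ uniformly for $\real s$ in a compact interval, because the family $\{N(\cdot)e^{-\sigma\,\cdot}\}_\sigma$ is compact in $L^1(0,\tau)$ and the Riemann--Lebesgue lemma is uniform on such families. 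This produces $|\Delta(s)| \geq 1 - |\xi|e^{(\eta'-\varepsilon)\tau} - o(1) > 0$ for $|\imag s|$ large, while on the remaining compact portion of the line the spectral gap assumption combined with continuity of $\Delta$ supplies the bound. Combined with standard $L^2$ estimates on $G(s;z^0)$ along the contour, this yields the claimed exponential stability; the full details within the neutral functional differential equation framework are carried out in~\cite{hale1993introduction,henry1974linear}.
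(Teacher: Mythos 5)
The paper offers no proof of this lemma: it is imported verbatim from \cite[Chapter~9, Theorem~3.5]{hale1993introduction} and \cite{henry1974linear}, so there is no internal argument to compare against. Your sketch reconstructs the standard Laplace-transform proof behind those references and is sound in outline. For necessity, the modal solutions $e^{s_0 t}$ do satisfy \eqref{distributed delay0} with compatible $H^1$ data, and since $\lVert (e^{s_0 \cdot})_{[t]}\rVert_{L^2(-\tau,0)}$ equals a fixed constant times $e^{\real(s_0)t}$, applying the decay estimate to the real and imaginary parts forces $\real(s_0)\leq-\nu$, which yields the uniform gap $\eta=\nu$. For sufficiency, the two ingredients you isolate --- keeping $\lvert\xi\rvert e^{(\eta'-\varepsilon)\tau}<1$ via \eqref{q_rho}, and the Riemann--Lebesgue decay of the integral term, uniform over a compact range of $\real s$ --- are exactly what makes the argument work for a difference equation, where zeros of $\Delta$ may accumulate on a vertical line and the uniform gap in the hypothesis is genuinely needed (merely assuming $\real s<0$ for all zeros would not suffice).

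Two points in the sufficiency direction should be tightened. First, your final estimate is stated against $\lVert z^0\rVert_{H^1}$, whereas exponential stability in the $L^2$ norm requires a constant multiple of $\lVert z^0\rVert_{L^2(-\tau,0)}$. This is repairable without new ideas: the forcing term is explicitly $G(s;z^0)=\xi e^{-s\tau}\int_{-\tau}^0 e^{-su}z^0(u)\,\diff u+\int_0^\tau N(\nu)e^{-s\nu}\int_{-\nu}^0 e^{-su}z^0(u)\,\diff u\,\diff\nu$, and by Plancherel its restriction to any vertical line in the relevant strip lies in $L^2$ of that line with norm controlled by $\lVert z^0\rVert_{L^2(-\tau,0)}$ alone, so no derivative of the data is needed. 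Second, to justify moving the inversion contour from $\real s=\sigma_0\gg1$ down to $\real s=-\eta'+\varepsilon$ you need $\inf\lvert\Delta\rvert>0$ and uniform $L^2$ bounds on $\hat z$ over the whole closed strip $-\eta'+\varepsilon\leq\real s\leq\sigma_0$ (a Paley--Wiener-type argument), not only on the target line; your estimate does deliver this if the large-$\lvert\imag s\rvert$ bound is run uniformly in $\real s$ over that strip and compactness plus the absence of zeros handles the remaining rectangle. With these adjustments the sketch matches the proof carried out in the cited sources.
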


We start our results by the following lemma, which provides a necessary condition for the stability of \eqref{distributed delay0} by studying the behavior of nonnegative real roots of $\Delta$.

\begin{lemma}\label{necessary_open}
The system \eqref{distributed delay0}  is not exponentially stable if \begin{equation*}\Delta(0)= 1-\xi -\int_0^\tau N(\nu)  \diff \nu\leq 0.
\end{equation*}
\end{lemma}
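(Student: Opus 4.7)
The plan is to exhibit a nonnegative real root of the characteristic function $\Delta$, which, by the preceding lemma (\cite[Chapter~9, Theorem~3.5]{hale1993introduction}), rules out exponential stability. The proof will be a short argument based on the intermediate value theorem applied to the restriction of $\Delta$ to the real axis.

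First I would note that $\Delta$ is continuous on $\mathbb{R}$ (since $N \in L^1$ and the exponential is bounded on $[0, \tau]$ for bounded real arguments). By hypothesis we already have $\Delta(0) \leq 0$. The main step is then to analyze the behavior of $\Delta(s)$ as $s \to +\infty$ along the real axis. For real $s > 0$, clearly $\xi \mathrm{e}^{-s\tau} \to 0$, and for the integral term, since $\lvert N(\nu) \mathrm{e}^{-s\nu} \rvert \leq \lvert N(\nu) \rvert$ for $\nu \in [0,\tau]$ and $s \geq 0$, with the dominating function $\lvert N \rvert \in L^1([0,\tau])$, the dominated convergence theorem yields
\begin{equation*}
\lim_{s \to +\infty} \int_0^\tau N(\nu) \mathrm{e}^{-s\nu} \diff\nu = 0.
\end{equation*}
Hence $\lim_{s \to +\infty} \Delta(s) = 1 > 0$.

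Now I would split into two cases. If $\Delta(0) < 0$, the intermediate value theorem applied to $\Delta$ on $[0, +\infty)$ produces some $s_0 > 0$ with $\Delta(s_0) = 0$; this is a real positive root. If $\Delta(0) = 0$, then $s_0 = 0$ is itself a root of $\Delta$. In either case, $\Delta$ admits a root $s_0$ with $\real(s_0) \geq 0$, so no $\eta > 0$ can satisfy $\real(s) \leq -\eta$ for all solutions of \eqref{characteristic equation}. Invoking the characterization of exponential stability recalled just before the statement, system \eqref{distributed delay0} fails to be exponentially stable.

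I do not expect any genuine obstacle here: the only point requiring care is the vanishing of the integral term as $s \to +\infty$, but the integrability of $N$ (assumed in the setup of \eqref{distributed delay0}) makes the dominated convergence argument immediate. The argument also does not use the assumption $\lvert \xi \rvert < 1$, which is consistent with the remark following \eqref{q_rho} that exponential stability already fails when $\lvert \xi \rvert \geq 1$.
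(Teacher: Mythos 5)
Your argument is correct and follows essentially the same route as the paper: both split into the cases $\Delta(0)=0$ and $\Delta(0)<0$, use the continuity of $\Delta$ on the real axis together with $\lim_{s\to+\infty}\Delta(s)=1$, and invoke the intermediate value theorem to produce a nonnegative real root, which contradicts the spectral characterization of exponential stability. The only difference is that you spell out the dominated convergence argument for the vanishing of the integral term, which the paper leaves implicit.
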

\begin{proof}
If $\Delta(0)=0$, then zero is a root of $\Delta$, and \eqref{distributed delay} is not exponentially stable. If $\Delta(0)<0$ then there exists at least one positive real root of $\Delta$  since
\[
\lim_{\substack{s \to +\infty \\ s \in \R}}\Delta(s)=1
\]
and $\Delta$ is continuous on $(0, \infty)$. Thus, system \eqref{distributed delay} is not exponentially stable.
\end{proof}

The following lemma presents an interesting property, which is a consequence of~\cite[Theorem~2.1]{hale2002strong}. 

\begin{lemma}\label{finite roots}
    Assume that \eqref{q_rho} is satisfied. Then, for all $s_0 > \frac{1}{\tau}\ln \lvert \xi \rvert$, $\Delta$ has a finite number of roots in  $\{s \in \mathbb C \mid \real(s) \geq s_0\}$.
\end{lemma}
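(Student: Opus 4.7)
My plan is to show that the zeros of $\Delta$ in the closed half-plane $H := \{s \in \C \mid \real(s) \geq s_0\}$ lie in a bounded (hence compact) region, and then invoke the fact that a non-identically-zero entire function has only isolated zeros, so finitely many in any compact subset of $\C$.

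First, I would note that $\Delta$ is entire: $s \mapsto \xi \mathrm{e}^{-s\tau}$ is entire, and $s \mapsto \int_0^\tau N(\nu) \mathrm{e}^{-s\nu}\,\diff\nu$ is entire since $N\in L^1(0,\tau)$ and the integrand is holomorphic in $s$ with a locally integrable dominant. Second, for $\real(s) = \sigma \geq s_0$, the principal part is bounded by
\[
\lvert \xi \mathrm{e}^{-s\tau}\rvert = \lvert \xi\rvert \mathrm{e}^{-\sigma\tau} \leq \lvert \xi\rvert \mathrm{e}^{-s_0\tau} =: c_0 < 1,
\]
where the strict inequality $c_0 < 1$ follows precisely from the standing hypothesis $s_0 > \frac{1}{\tau}\ln\lvert \xi\rvert$ (with the convention that this reads $s_0\in\R$ when $\xi=0$). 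Setting $I(s) := \int_0^\tau N(\nu) \mathrm{e}^{-s\nu}\,\diff\nu$, this yields the lower bound $\lvert \Delta(s)\rvert \geq (1-c_0) - \lvert I(s)\rvert$ on $H$.

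The key remaining step, which I expect to be the main obstacle, is proving that $\lvert I(s)\rvert \to 0$ uniformly as $\lvert s\rvert \to \infty$ within $H$. I would argue by contradiction: assume $s_n = \sigma_n + i\omega_n \in H$ with $\lvert s_n\rvert \to \infty$ and $\lvert I(s_n)\rvert \geq \varepsilon$ for some $\varepsilon>0$. Either a subsequence satisfies $\sigma_n \to +\infty$, in which case dominated convergence (with dominant $\max(1,\mathrm{e}^{-s_0\tau}) \lvert N(\cdot)\rvert$) forces $\lvert I(s_n)\rvert \leq \int_0^\tau \lvert N(\nu)\rvert \mathrm{e}^{-\sigma_n \nu}\,\diff\nu \to 0$, a contradiction; or $\sigma_n$ stays bounded and $\lvert \omega_n\rvert \to \infty$, in which case I extract a further subsequence with $\sigma_n \to \sigma^\ast \in [s_0, +\infty)$ and split
\[
I(\sigma_n + i\omega_n) = \bigl(I(\sigma_n + i\omega_n) - I(\sigma^\ast + i\omega_n)\bigr) + I(\sigma^\ast + i\omega_n).
\]
The first summand vanishes by dominated convergence applied to the integrand $N(\nu)(\mathrm{e}^{-\sigma_n\nu}-\mathrm{e}^{-\sigma^\ast\nu})\mathrm{e}^{-i\omega_n\nu}$; the second vanishes by the Riemann--Lebesgue lemma applied to $\nu\mapsto N(\nu)\mathrm{e}^{-\sigma^\ast\nu}\in L^1(0,\tau)$. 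Both cases contradict $\lvert I(s_n)\rvert \geq \varepsilon$.

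Combining the two estimates, there exists $R>0$ such that $\lvert \Delta(s)\rvert \geq (1-c_0)/2 > 0$ for every $s\in H$ with $\lvert s\rvert \geq R$. Hence the zeros of $\Delta$ in $H$ are all contained in the compact set $H \cap \{\lvert s\rvert \leq R\}$. Since $\Delta$ is entire and $\Delta(\sigma) \to 1$ as $\sigma \to +\infty$ along the real axis, $\Delta$ is not identically zero, so its zeros are isolated points of $\C$; a compact set can contain only finitely many such points, which completes the argument.
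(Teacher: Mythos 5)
Your argument is correct and complete, but it takes a genuinely different route from the paper: the paper does not prove Lemma~\ref{finite roots} at all, instead deriving it as a consequence of a general spectral result for neutral/difference characteristic equations (\cite[Theorem~2.1]{hale2002strong}), which locates the asymptotic root chains of $\Delta$ near the vertical line $\real(s)=\frac{1}{\tau}\ln\lvert\xi\rvert$ determined by the principal part $1-\xi\mathrm{e}^{-s\tau}$. You instead give a self-contained elementary proof: the lower bound $\lvert\Delta(s)\rvert\geq(1-c_0)-\lvert I(s)\rvert$ with $c_0=\lvert\xi\rvert\mathrm{e}^{-s_0\tau}<1$, the uniform decay $I(s)\to 0$ as $\lvert s\rvert\to\infty$ in the half-plane (your two-case compactness argument combining dominated convergence for $\sigma_n\to+\infty$ with Riemann--Lebesgue along horizontal sequences is sound and exhaustive), and finally the isolation of zeros of the non-trivial entire function $\Delta$ on the resulting compact set. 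What your approach buys is transparency and independence from the cited reference -- it also makes explicit exactly where the hypothesis $s_0>\frac{1}{\tau}\ln\lvert\xi\rvert$ enters, namely in $c_0<1$; what the citation buys is generality, since the result of \cite{hale2002strong} covers matrix-valued systems and multiple delays and additionally describes where the infinitely many remaining roots accumulate, information that a bound of the form $\lvert\Delta(s)\rvert>0$ for large $\lvert s\rvert$ does not provide. Both are valid proofs of the statement as written.
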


This lemma implies that the function $\Delta$ can only have a finite number of roots on the imaginary axis. Let $\rho_1, \ldots, \rho_m$ be the positive zeros of $M(\omega)=\real\left(\Delta(i\omega)\right)$, repeated according to their multiplicities and ordered so that $0<\rho_m \leq \cdots \leq \rho_1$. The following theorem gives the number of roots of the characteristic function $\Delta$ which lie in $\{s \in \mathbb C \mid \real(s) > 0\}$, counted with their multiplicities.

\begin{theorem}\label{thm_open}
Assume that equation~\eqref{q_rho} is satisfied, that $\Delta$ has no roots on the imaginary axis, and that
\begin{equation}\label{nece_suf}
\int_0^\tau N(\nu)  \diff \nu< 1-\xi.
\end{equation}
Then the number of roots of the characteristic function $\Delta$ which lie in $\{s \in \mathbb C \mid \real(s)>0\}$, counted by multiplicity, is given by
\begin{equation} \label{2cond}
\Gamma :=  \sum_{j=1}^m(-1)^{j-1} \sign\left(S(\rho_j)\right),
\end{equation}
where $S\colon \mathbb R \to \mathbb R$ is the function given by $S(\omega)=\imag\left(\Delta(i\omega)\right)$.
\end{theorem}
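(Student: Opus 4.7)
The plan is to apply Cauchy's argument principle to the counterclockwise boundary $\mathcal{C}_R$ of the right half-disc $\{s:|s|<R,\,\real s>0\}$. By Lemma~\ref{finite roots} and assumption~\eqref{q_rho}, the number $N$ of zeros of $\Delta$ in the open right half-plane is finite, so for $R$ large enough all of them lie strictly inside $\mathcal{C}_R$. Since by hypothesis $\Delta$ has no zeros on $i\R$, the argument principle identifies $2\pi N$ with the total change of $\arg\Delta$ along $\mathcal{C}_R$. I would split $\mathcal{C}_R$ into the imaginary segment from $iR$ down to $-iR$ and the semicircular arc $\mathcal{A}_R=\{Re^{i\theta}:\theta\in[-\pi/2,\pi/2]\}$ from $-iR$ through $R$ to $iR$. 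The conjugation identity $\Delta(-i\omega)=\overline{\Delta(i\omega)}$, valid since the coefficients are real, makes the continuous branch $\tilde\phi$ of $\omega\mapsto\arg\Delta(i\omega)$ with $\tilde\phi(0)=0$ an odd function---well-defined because $\Delta(0)>0$ by~\eqref{nece_suf}---so the variation along the imaginary segment equals $-2\tilde\phi(R)$. Writing $\mathcal{S}_R$ for the variation along $\mathcal{A}_R$, the key identity is
\[
  2\pi N = -2\tilde\phi(R) + \mathcal{S}_R.
\]

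To compute $\tilde\phi(R)$, I would adapt the St\'ep\'an--Hassard idea of tracking the argument across the crossings $\rho_1\geq\cdots\geq\rho_m$. Between two consecutive crossings $\rho_{j+1}<\rho_j$, the function $M=\real\Delta(i\cdot)$ has constant sign, so $\Delta(i\cdot)$ stays in one open half-plane and $\tilde\phi$ is confined to an interval of length~$\pi$ modulo $2\pi$; at each $\rho_j$, $\tilde\phi(\rho_j)\equiv\sign(S(\rho_j))\,\pi/2\pmod{2\pi}$. A short case analysis depending on the signs of $S(\rho_j)$, $S(\rho_{j+1})$, and $M$ on $(\rho_{j+1},\rho_j)$ gives an explicit formula for each local increment $\tilde\phi(\rho_j)-\tilde\phi(\rho_{j+1})$. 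Summing over $j$ telescopes, and exploiting that $M>0$ on $[0,\rho_m)$ and eventually on $(\rho_1,+\infty)$ leads to
\[
  \tilde\phi(R) = -\pi\Gamma + \arg\bigl(1-\xi\mathrm{e}^{-iR\tau}\bigr) + o(1)\quad\text{as }R\to+\infty,
\]
where the second term is the bounded oscillatory tail obtained by applying the Riemann--Lebesgue lemma to the integral part of $\Delta(iR)$. The factor $(-1)^{j-1}$ in $\Gamma$ naturally absorbs zeros of $M$ of even multiplicity, which contribute zero both to $\Gamma$ and to the telescoping sum.

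For the arc, I would split $\mathcal{A}_R$ at $\theta=\pm(\pi/2-\eta)$ for small fixed $\eta>0$. On the central portion, $\real(Re^{i\theta})\geq R\sin\eta\to+\infty$, so dominated convergence (combined with $|\xi|<1$) gives $\Delta(Re^{i\theta})\to 1$ uniformly and hence a vanishing argument variation. On the two small end-arcs, the substitution $\theta=\pm(\pi/2-t/R)$ together with an asymptotic analysis exploiting $|\xi|<1$ (which keeps $\Delta$ confined to a disc around $1$ that avoids the origin) shows that the respective argument variations converge to $\pm\arg(1-\xi\mathrm{e}^{\mp iR\tau})$. Using conjugation, $\mathcal{S}_R = 2\arg(1-\xi\mathrm{e}^{-iR\tau})+o(1)$. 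Substituting both expressions into the key identity, the oscillatory contributions of $\arg(1-\xi\mathrm{e}^{-iR\tau})$ cancel exactly, yielding $2\pi N = 2\pi\Gamma+o(1)$; since both sides are integers, $N=\Gamma$ as desired. The most delicate step is the asymptotic analysis of $\mathcal{S}_R$ on the end-arcs, where the exponential decay of $\mathrm{e}^{-s\tau}$ is lost near $s=\pm iR$: the hypothesis $|\xi|<1$ is essential both to keep $\Delta$ bounded away from $0$ there and to ensure that the persistent oscillations of $\tilde\phi(R)$ on the imaginary axis match and cancel with those produced on the end-arcs.
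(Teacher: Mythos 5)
Your proof is correct and follows essentially the same route as the paper: the argument principle on the boundary of the right half-disc, conjugate symmetry to reduce the imaginary segment to a St\'ep\'an--Hassard telescoping count, and the Riemann--Lebesgue lemma together with $\lvert\xi\rvert<1$ to control the arc, with your oscillatory term $\arg\bigl(1-\xi\mathrm{e}^{-iR\tau}\bigr)$ cancelling between the two pieces exactly as the paper's $\arg(\Delta(iR))$ does. The only notable difference is that your three-piece end-arc asymptotics are more work than needed: since the Riemann--Lebesgue decay holds uniformly on $\{s\in\mathbb C\mid\real(s)\ge 0\}$, the paper simply observes that $\real(\Delta)>0$ on the entire arc for large $R$, so the arc contribution is directly $\tfrac{1}{\pi}\arg(\Delta(iR))$ via the principal logarithm.
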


\begin{proof}
For any $R>0$, let $C_R$ be the positively oriented contour defined by the curves $g_1$ and $g_2$, with
\[
g_1 \colon\left\{\begin{aligned}
\left[-\frac{\pi}{2}, \frac{\pi}{2}\right] & \to \mathbb C, \\
\theta & \mapsto R \mathrm e^{i \theta},
\end{aligned}\right. \quad\qquad g_2 \colon\left\{\begin{aligned}
\left[-R, R\right] & \to \mathbb C, \\
\omega & \mapsto -i\omega.
\end{aligned}\right.
\]
From Lemma~\ref{finite roots}, all zeros of $\Delta$ in $\{s \in \mathbb C \mid \real(s)>0\}$ are inside $C_R$, for sufficiently large $R$. By the argument principle, the number of zeros $n_0$ of $\Delta$ in $\{s \in \mathbb C \mid \real(s)>0\}$, counted with their multiplicities, is given by
\begin{equation}\label{prin_gene}
n_0=\frac{1}{2 \pi i} \oint_{C_R} \frac{\Delta^{\prime}(s)}{\Delta(s)} \diff s=\frac{1}{2 \pi i} \int_{g_1} \frac{\Delta^{\prime}(s)}{\Delta(s)} \diff s+\frac{1}{2 \pi i} \int_{g_2} \frac{\Delta^{\prime}(s)}{\Delta(s)} \diff s.
\end{equation}
We now focus on computing the different integral terms.
\medskip

\noindent\uline{Value of the integral over $g_1$ in \eqref{prin_gene}:} 
From the Riemann--Lebesgue lemma, we have that
\[
\lim_{\substack{\lvert s\rvert \to +\infty \\ \real(s) \geq 0}} \int_0^\tau N(\nu) \mathrm{e}^{-s \nu} \diff \nu = 0.
\]
Then, for all $\epsilon>0$, there exists $R_0>0$  such that, for all $R\geq R_0$ and $\theta \in \left[-\frac{\pi}{2}, \frac{\pi}{2}\right]$,
\begin{equation}
\real(\Delta(R\mathrm{e}^{i\theta}))\geq 1-\lvert  \xi\rvert-\epsilon.
\end{equation}
In particular, for $\epsilon=\frac{1-\lvert  \xi \rvert}{2}$, we have $ \real(\Delta(R\mathrm{e}^{i\theta}))>0$ for sufficiently large $R$ and all $\theta \in \left[-\frac{\pi}{2}, \frac{\pi}{2}\right]$. Then, for sufficiently large $R$, the function $s \mapsto \Ln(\Delta(s)) = \ln \lvert \Delta(s) \rvert + i \arg(\Delta(s))$ is an analytic function in a neighborhood of $g_1$, with $\frac{\mathrm d}{\mathrm d s} \Ln(\Delta(s)) = \frac{\Delta^\prime(s)}{\Delta(s)}$, and thus
\begin{align}
\frac{1}{2 \pi i} \int_{g_1} \frac{\Delta^{\prime}(s)}{\Delta(s)} \diff s  = \frac{\Ln\Delta(i R) - \Ln\Delta(-i R)}{2 \pi i} 
 = \frac{1}{\pi} \arg(\Delta(i R)), \label{g1_gene}
\end{align}
where we used the fact that $\Delta(-i R) = \overline{\Delta(i R)}$.

\medskip

\noindent\uline{Value of the integral over $g_2$ in \eqref{prin_gene}:}
Using the fact that 
\[\Delta(-i \omega) = \overline{\Delta(i \omega)} \quad \text{for every }\omega \in \mathbb R,
\]
we obtain that $\Delta^\prime(-i\omega) = \overline{\Delta^\prime(i \omega)}$ for every $\omega \in \mathbb R$, which implies
\[
\frac{1}{2 \pi i} \int_{g_2} \frac{\Delta^{\prime}(s)}{\Delta(s)} \diff s = -\frac{1}{\pi} \int_0^R \real\left(\frac{\Delta^\prime(i \omega)}{\Delta(i \omega)}\right) \diff \omega.
\]
Since $\Delta$ has no roots on the imaginary axis, we have $\Delta(i\omega)=A(\omega)\mathrm{e}^{i\phi(\omega)}$ for some differentiable functions $A\colon \mathbb R \to \mathbb R_+^\ast$ and $\phi\colon \mathbb R \to \mathbb R$ with $\phi(R) = \arg(\Delta(i R))$. Hence
\begin{equation*}
\frac{\Delta'(i\omega)}{\Delta(i\omega)}=-i\frac{A'(\omega)}{A(\omega)}+\phi'(\omega),
\end{equation*}
and we deduce that
\[
\frac{1}{2 \pi i} \int_{g_2} \frac{\Delta^{\prime}(s)}{\Delta(s)} \diff s = \frac{1}{\pi}(\phi(0) - \phi(R)).
\]
According to Condition~\eqref{nece_suf}, we have $M(0)>0$ and $S(0)=0$, where $M\colon \omega\in\R\mapsto \real\left(\Delta(i\omega)\right)$ and   $S\colon \omega\in\R\mapsto \imag\left(\Delta(i\omega)\right)$. Then, as shown in \cite[Section~3.7]{hassard1997counting}, we can prove that
\[\phi(0)=\pi\sum_{j=1}^m(-1)^{j-1} \sign\left(S(\rho_j)\right).\]
Thus
\begin{equation}\label{int_g2}
 \frac{1}{2 \pi i} \int_{g_2} \frac{\Delta'(s)}{\Delta(s)} \diff s=\sum_{j=1}^m(-1)^{j-1} \sign\left(S(\rho_j)\right)-\frac{\arg(\Delta(i R))}{\pi} .
\end{equation}
Combining~\eqref{g1_gene} and \eqref{int_g2}, we finally obtain that
\begin{align}\label{prin_final}
\frac{1}{2 \pi i} \oint_{C_R} \frac{\Delta^{\prime}(s)}{\Delta(s)} \diff s=\sum_{j=1}^m(-1)^{j-1} \sign\left(S(\rho_j)\right).
\end{align}
Consequently, the number of roots of the characteristic function $\Delta$ which lie in $\real(s)>0$, counted by multiplicity, is given by
\begin{equation} 
\sum_{j=1}^m(-1)^{j-1} \sign\left(S(\rho_j)\right),
\end{equation}
as required.
\end{proof}

Combining Lemma~\ref{necessary_open}, Theorem~\ref{thm_open}, and Lemma~\ref{finite roots}, we obtain at once the following exponential stability result for \eqref{distributed delay0}. 

\begin{corollary}
\label{coro:stability}
Assume that equation~\eqref{q_rho} is satisfied and that $\Delta$ has no roots on the imaginary axis. Then, the system \eqref{distributed delay0} is exponentially stable if and only if \eqref{nece_suf} holds and $\Gamma=0$.
\end{corollary}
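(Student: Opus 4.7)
The plan is to derive Corollary~\ref{coro:stability} by combining four ingredients already in hand: the spectral characterization of exponential stability (the lemma immediately preceding Lemma~\ref{necessary_open}), the necessary condition of Lemma~\ref{necessary_open}, the counting formula of Theorem~\ref{thm_open}, and the finiteness statement of Lemma~\ref{finite roots}. Once these pieces are marshaled, both implications reduce to short arguments.

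For the \emph{only if} direction, I would assume that \eqref{distributed delay0} is exponentially stable. The spectral characterization then furnishes some $\eta > 0$ such that every root $s$ of $\Delta$ satisfies $\real(s) \leq -\eta$, so in particular $\Delta$ has no nonnegative real zero. The contrapositive of Lemma~\ref{necessary_open} then gives $\Delta(0) > 0$, which is exactly \eqref{nece_suf}. Since, by the standing hypothesis, $\Delta$ has no roots on the imaginary axis, Theorem~\ref{thm_open} applies and counts the roots with strictly positive real part as $\Gamma$; since there are none, $\Gamma = 0$. For the \emph{if} direction, I would invoke Theorem~\ref{thm_open} directly: under \eqref{q_rho}, \eqref{nece_suf}, the absence of imaginary-axis roots, and $\Gamma = 0$, the theorem gives that $\Delta$ has no zero in $\{\real(s) > 0\}$, and hence every root lies in the open left half-plane.

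The nontrivial step is then to upgrade this pointwise negativity to a uniform spectral gap $\real(s) \leq -\eta$, as required by the spectral characterization. For this I would use Lemma~\ref{finite roots}: since $\lvert \xi\rvert < 1$, we have $\frac{1}{\tau}\ln\lvert \xi\rvert < 0$, so any $s_0$ with $\frac{1}{\tau}\ln\lvert \xi\rvert < s_0 < 0$ is admissible, and Lemma~\ref{finite roots} ensures that $\Delta$ has only finitely many roots in $\{\real(s) \geq s_0\}$. The maximum $r^\ast$ of the real parts of these finitely many roots is attained and strictly negative, while all remaining roots already satisfy $\real(s) < s_0 < 0$; any $\eta \in \bigl(0, \min(-s_0, -r^\ast)\bigr)$ (or $\eta \in (0, -s_0)$ if no such roots exist at all) then bounds every root's real part by $-\eta$, and the spectral characterization delivers exponential stability.

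I expect the main obstacle to be precisely this last step: the passage from ``every root satisfies $\real(s) < 0$'' to the uniform bound ``every root satisfies $\real(s) \leq -\eta$'' for a single $\eta > 0$. For a general integral difference equation one could a priori have a sequence of roots $(s_n)$ with $\real(s_n) \to 0^-$, in which case the spectral characterization would fail to yield exponential stability; Lemma~\ref{finite roots} is exactly what rules this out here, by confining any possible accumulation of roots to the vertical line $\real(s) = \frac{1}{\tau}\ln\lvert \xi\rvert$, which lies strictly in the open left half-plane under assumption~\eqref{q_rho}.
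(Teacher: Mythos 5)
Your proposal is correct and follows exactly the route the paper intends: the authors state that the corollary follows ``at once'' by combining Lemma~\ref{necessary_open}, Theorem~\ref{thm_open}, and Lemma~\ref{finite roots} with the spectral characterization of exponential stability, and your write-up simply fills in those details, including the key use of Lemma~\ref{finite roots} to upgrade ``all roots in the open left half-plane'' to a uniform spectral gap.
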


\begin{remark}
Note that the use of this result requires finding the roots of $\Delta$ on the imaginary axis. This is equivalent to finding the common roots of the functions $M$ and $S$, where $M\colon \omega \in \mathbb{R} \mapsto \real(\Delta(i\omega))$ and $S\colon \omega \in \mathbb{R} \mapsto \imag(\Delta(i\omega))$. For this purpose, there are many numerical solvers available, such as those in numerical libraries like Scipy (with functions such as \textbf{scipy.optimize.root} and \textbf{scipy.optimize.newton}) and Matlab (with functions like \textbf{fsolve} and \textbf{roots}). These tools and methods enable efficient handling of the task of finding roots of real-valued functions defined on the set of real numbers.
\end{remark}

\begin{remark}
One might wonder whether condition \eqref{nece_suf} is sufficient to guarantee that the function $\Delta$ has no root on the imaginary axis. To see that this is not the case, consider the function $N$ given by
\[N(\nu)=-\frac{3\pi^2}{16-4\pi}\nu + \frac{\pi^2+2\pi}{16 - 4\pi},\]
with $\tau=1$ and $\xi=\frac{1}{2}$. Then, we obtain
\[\int_0^\tau N(\nu)  \diff \nu=\frac{\pi}{8}<\frac{1}{2},\]
which shows that condition \eqref{nece_suf} is satisfied. On the other hand, we compute
\[
\Delta(i\tfrac{\pi}{2}) = 1 + \frac{i}{2} - \int_0^1 N(\nu) \mathrm e^{-i \nu \frac{\pi}{2}} \diff\nu = 0,
\]
showing that $i\frac{\pi}{2}$ is a root of $\Delta$ (and so is its complex conjugate $-i\frac{\pi}{2}$). 
Figure~\ref{fig:_1} shows the roots\footnote{Computations of the roots were performed with Python's \texttt{cxroots} package \cite{parini2018cxroots}, and the search of roots was limited to the rectangle $\{s \in \mathbb{C} \mid \lvert \real(s) \rvert \leq 5,\lvert \imag(s)\rvert \leq 20\}$.} of $\Delta$ and we can see the presence of roots of $\Delta$ on the imaginary axis.
\begin{figure}[ht]
\centering
\includegraphics[width=0.7\textwidth]{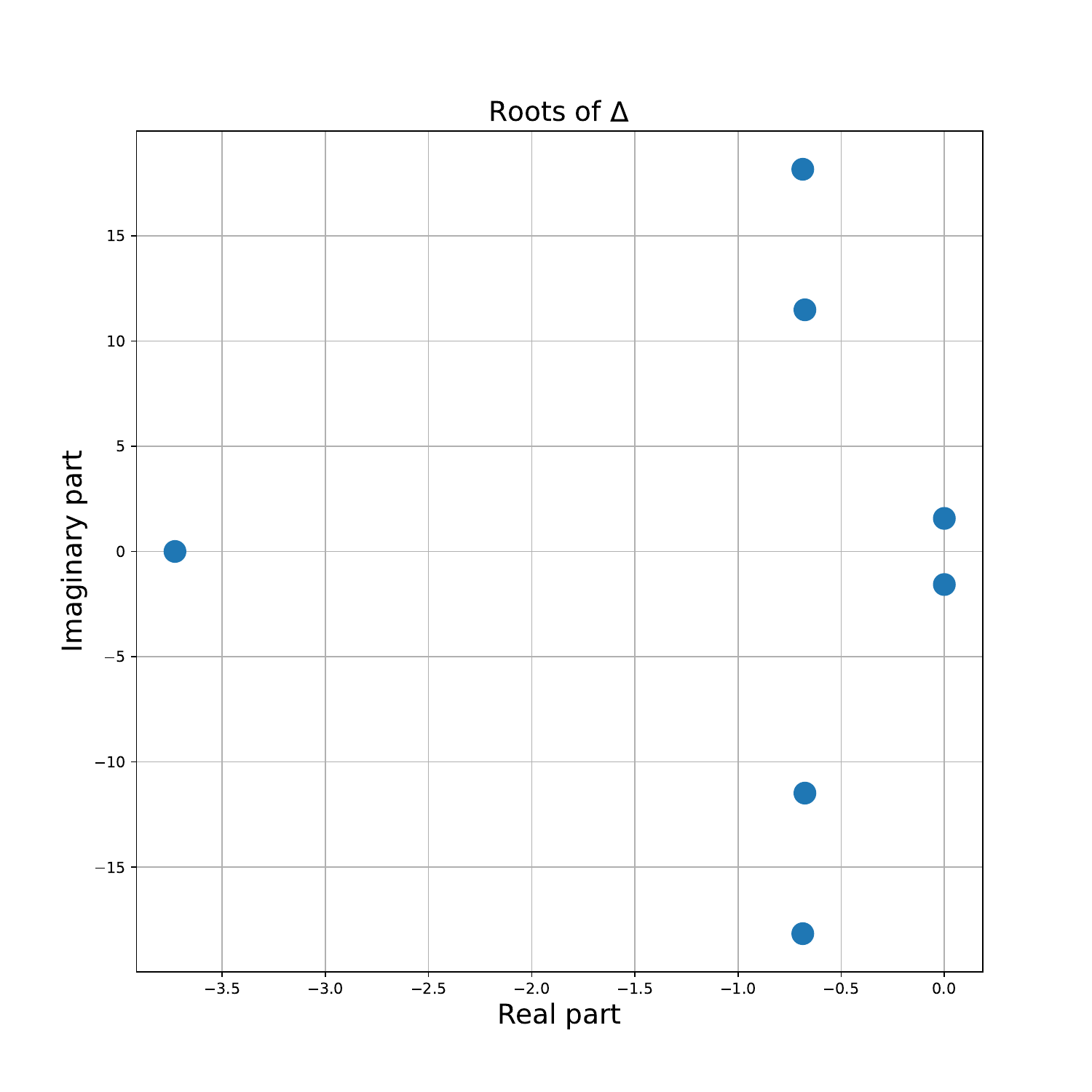}
\caption{Roots of $\Delta$ with $N(\nu)=-\frac{3\pi^2}{16-4\pi}\nu + \frac{\pi^2+2\pi}{16 - 4\pi}$.}
\label{fig:_1}
\end{figure}
\end{remark}

\subsection{Application to the stability analysis of the system~\eqref{eq:hyperbolic_couple}}

Since the PDE system \eqref{eq:hyperbolic_couple} has equivalent stability properties to those of~\eqref{distributed delay0} with $\xi=\rho q$, we directly obtain the following corollary

\begin{corollary} \label{result:eqhyp}
Assume that $\lvert  q\rho\rvert<1$ and that the characteristic equation $\Delta$ (defined by equation~\eqref{characteristic equation} with $\xi=\rho q$) has no roots on the imaginary axis. Then, the system \eqref{eq:hyperbolic_couple} is exponentially stable if and only if equation \eqref{nece_suf} holds and $\Gamma=0$.
\end{corollary}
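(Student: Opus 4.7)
The plan is to deduce this corollary directly from Corollary~\ref{coro:stability} by invoking the equivalence of stability already established between the PDE system~\eqref{eq:hyperbolic_couple} and the integral difference equation~\eqref{distributed delay0}. First I would recall that the backstepping transformation~\eqref{backstepping}, together with its inverse~\eqref{backstepping_inverse}, is a bounded invertible change of coordinates on $(L^2(0,1))^2$ (since the kernels $K^{uu}, K^{uv}, K^{vu}, K^{vv}, L^{\alpha\alpha}, L^{\alpha\beta}, L^{\beta\alpha}, L^{\beta\beta}$ are bounded continuous on $\mathcal{T}$). Consequently, the $L^2$ exponential stability of $(u,v)$ is equivalent to the $L^2$ exponential stability of $(\alpha,\beta)$.

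Next I would observe that, as established in Section~\ref{sec_description}, the function $z(t) = \beta(t,1)$ satisfies the IDE~\eqref{distributed delay}, which is precisely~\eqref{distributed delay0} with $\xi = \rho q$ and with $N$ defined by~\eqref{N}. Theorem~\ref{theorem_equiv_norm} then gives the equivalence between the $L^2$ exponential stability of $z_{[t]}$ and that of $(\alpha,\beta)$. Chaining these two equivalences shows that the exponential stability of~\eqref{eq:hyperbolic_couple} in $L^2$ norm is equivalent to the exponential stability of~\eqref{distributed delay0} (with $\xi = \rho q$) in $L^2$ norm.

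Finally, since the assumption $\lvert q\rho\rvert < 1$ is exactly condition~\eqref{q_rho} for $\xi = \rho q$, and the hypothesis that $\Delta$ has no roots on the imaginary axis is also assumed, Corollary~\ref{coro:stability} applies and yields that the IDE~\eqref{distributed delay0} is exponentially stable if and only if~\eqref{nece_suf} holds and $\Gamma = 0$. Through the equivalence established above, the same necessary and sufficient condition characterises the exponential stability of~\eqref{eq:hyperbolic_couple}, which is the desired conclusion.

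There is no genuine obstacle in this argument; the entire content of the corollary is packaged into Corollary~\ref{coro:stability} and Theorem~\ref{theorem_equiv_norm}. The only point meriting a minor check is that the compatibility condition~\eqref{compatibility_condition_u_v} imposed on $(u_0, v_0)$ translates, through~\eqref{backstepping} and~\eqref{boundary}, into the compatibility condition on $z^0 = z_{[0]}$ required for~\eqref{distributed delay0}, so that the two stability notions are indeed compared on coherent classes of initial data; this follows directly from the boundary relations in~\eqref{boundary} evaluated at $t = 0$.
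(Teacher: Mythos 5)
Your argument is correct and follows exactly the route the paper takes: the paper states Corollary~\ref{result:eqhyp} as an immediate consequence of the stability equivalence between~\eqref{eq:hyperbolic_couple} and~\eqref{distributed delay0} with $\xi = \rho q$ (via the backstepping transformation and Theorem~\ref{theorem_equiv_norm}) combined with Corollary~\ref{coro:stability}. Your write-up simply makes explicit the chain of equivalences and the compatibility-condition check that the paper leaves implicit.
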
 

\section{Comparison with other criteria and numerical validation}\label{sec_comparaison}

Our next goal is to compare the stability criterion from Corollary~\ref{result:eqhyp} to similar results in the literature. More precisely, we will compare our results to the stability criteria from \cite{bastin2016stability, saba2019stability} for the case where the in-domain coupling terms $\sigma^+$ and $\sigma^-$ are constant. In this particular situation, it is shown in \cite{saba2019stability} that the function $N$ from \eqref{N} is given by
\begin{equation}
\label{eq:explicit-N-Bessel}
N(\nu) = \left(\frac{a}{\tau} + \frac{d(\nu)}{\tau^2}\right) J_0\left(2 \sqrt{h(\nu)}\right) + \frac{d(\nu)}{\tau^2} J_2\left(2 \sqrt{h(\nu)}\right), \quad \nu \in [0, \tau],
\end{equation}
where
\[
\begin{aligned}
h(\nu) & = \frac{R}{\tau^2}\nu(\tau-\nu), & \quad d(\nu) & = R(\tau-\nu b), \\
a & = \frac{q}{\mu}\sigma^- + \frac{\rho}{\lambda}\sigma^+, & R & = \frac{\sigma^+\sigma^-}{\lambda\mu}, &\quad b & = 1 + q\rho,
\end{aligned} 
\]
and $J_0$ and $J_2$ are Bessel functions of the first kind (see, e.g., \cite{olver2010nist}). Using the series expansion of Bessel functions from \cite[(10.2.2)]{olver2010nist}, the above expression of $N$ can be rewritten as
\begin{equation}
\label{eq:explicit-N-series}
N(\nu) = \left(\frac{a}{\tau} + \frac{d(\nu)}{\tau^2}\right) \sum_{p=0}^{\infty} \frac{(-1)^p}{(p!)^2} \left(h(\nu)\right)^{p} + \frac{d(\nu)}{\tau^2} \sum_{p=0}^{\infty} \frac{(-1)^p}{p! (p + 2) !} \left(h(\nu)\right)^{p + 1},  \quad \nu \in[0, \tau].
\end{equation}

Let us now recall the stability criteria from \cite{bastin2016stability, saba2019stability} with which we will compare our Corollary~\ref{result:eqhyp}. We first present the Lyapunov-based exponential stability condition for linear systems of balance laws from~\cite[Theorem~5.4]{bastin2016stability}.

\begin{proposition}[{\cite[Theorem~5.4]{bastin2016stability}}]
\label{prop:criterion-bastin-coron}
Assume that $\sigma^+$ and $\sigma^{-}$ are constants and define
\[M=\begin{pmatrix}
0&-\sigma^+\\-\sigma^{-}&0
\end{pmatrix}, \quad \mathbf{K}=\begin{pmatrix}
0&q\\\rho&0
\end{pmatrix}, \quad \text{ and } \quad \Lambda=\begin{pmatrix}
\lambda&0\\0&-\mu
\end{pmatrix}.\]
If there exists a $2\times 2$ positive diagonal real matrix $P$  such that
\[
M^{\top} P+P M \text { is positive semi-definite }
\]
and
\[
\left\lVert \delta \mathbf{K} \delta^{-1}\right\rVert<1
\]
with $\delta \triangleq \sqrt{P\lvert \Lambda\rvert}$ (the operations here being understood component-wise), then system \eqref{eq:hyperbolic_couple} is exponentially stable for the $L^2$ norm.
\end{proposition}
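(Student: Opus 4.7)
The statement is a recalled classical result from Bastin--Coron, so my plan is to sketch the standard weighted Lyapunov argument that typically establishes such criteria.

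The approach is to construct a Lyapunov functional of the form
\[
V(t) = \int_0^1 \left( p_1 u(t,x)^2 \mathrm{e}^{-\eta x/\lambda} + p_2 v(t,x)^2 \mathrm{e}^{\eta x/\mu} \right) \diff x,
\]
where $P = \operatorname{diag}(p_1, p_2)$ is the positive diagonal matrix supplied by the hypothesis and $\eta > 0$ is a small parameter to be fixed. The weights $\mathrm{e}^{\mp \eta x/\lambda}$ and $\mathrm{e}^{\eta x/\mu}$ break the $x$-symmetry in a way that is tailored to the opposite directions of propagation of $u$ and $v$. For $\eta$ small enough, $V$ is equivalent to the squared $L^2$ norm of $(u,v)$, so exponential decay of $V$ yields the desired stability.

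Next I would differentiate $V$ along solutions of \eqref{eq:hyperbolic_couple}, apply integration by parts to the transport terms $u u_x$ and $v v_x$, and split the result into three pieces: a bulk negative-definite contribution of the form $-\eta \int_0^1 (p_1 u^2 \mathrm{e}^{-\eta x/\lambda} + p_2 v^2 \mathrm{e}^{\eta x/\mu})\diff x$ produced by differentiating the exponential weights, a boundary piece evaluated at $x=0$ and $x=1$, and a cross-coupling piece of the form $2\int_0^1 (u,v) \, \tilde{P}(x) M (u,v)^\top \diff x$ where $\tilde P(x)$ is $P$ multiplied by the (bounded) weights. The assumption that $M^\top P + P M$ is positive semidefinite (which, for diagonal $P$ and the given anti-diagonal $M$, forces the relation $p_1 \sigma^+ + p_2 \sigma^- = 0$) implies that the symmetric part of the cross term vanishes in the limit $\eta \to 0^+$, so this piece is dominated by the $O(\eta)$ bulk contribution after a straightforward Cauchy--Schwarz estimate.

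For the boundary piece, I would substitute $u(t,0) = q v(t,0)$ and $v(t,1) = \rho u(t,1)$ to obtain
\[
B(t) = u(t,1)^2 \bigl(-p_1 \lambda \mathrm{e}^{-\eta/\lambda} + p_2 \mu \rho^2 \mathrm{e}^{\eta/\mu}\bigr) + v(t,0)^2 \bigl(p_1 \lambda q^2 - p_2 \mu\bigr).
\]
The hypothesis $\lVert \delta \mathbf{K} \delta^{-1} \rVert < 1$ with $\delta = \sqrt{P\lvert\Lambda\rvert}$ translates precisely into $q^2 p_1 \lambda < p_2 \mu$ and $\rho^2 p_2 \mu < p_1 \lambda$, which make both coefficients in $B(t)$ strictly negative at $\eta = 0$; a continuity argument then yields $B(t) \leq 0$ for some $\eta > 0$.

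Combining the three estimates gives $\dot V(t) \leq -c V(t)$ for some $c > 0$, and Grönwall's inequality concludes exponential stability in $L^2$ norm. The main obstacle is choosing $\eta$ small enough and the dissipation rate $c$ uniformly so that the $O(\eta)$ bulk term simultaneously absorbs the residual cross-coupling and leaves a strictly negative right-hand side; this is essentially a compactness/continuity check relying on the strict inequality $\lVert \delta \mathbf{K} \delta^{-1} \rVert < 1$, which provides the needed margin.
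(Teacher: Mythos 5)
First, a point of reference: the paper does not prove this proposition at all --- it is recalled verbatim from \cite[Theorem~5.4]{bastin2016stability} purely for the purpose of comparison with Corollary~\ref{result:eqhyp}, so there is no in-paper proof to match your argument against. Your sketch is the natural weighted-$L^2$ Lyapunov approach, and several of its ingredients are correct: the integration by parts, the boundary term $B(t)$, and the translation of $\lVert \delta \mathbf{K}\delta^{-1}\rVert<1$ into the two strict inequalities $q^2 p_1\lambda < p_2\mu$ and $\rho^2 p_2\mu < p_1\lambda$ (the spectral norm of an antidiagonal $2\times 2$ matrix being the maximum of the moduli of its entries) are all right, as is the observation that positive semidefiniteness of $M^\top P+PM$ forces $p_1\sigma^+ + p_2\sigma^-=0$.

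The gap is in the sentence claiming that the residual cross-coupling ``is dominated by the $O(\eta)$ bulk contribution after a straightforward Cauchy--Schwarz estimate.'' After inserting the weights, the cross term equals $2p_1\sigma^+\int_0^1 uv\,(\mathrm{e}^{-\eta x/\lambda}-\mathrm{e}^{\eta x/\mu})\,\diff x$, whose magnitude is bounded by $C\,\eta\, V$ with $C$ proportional to $\lvert\sigma^+\rvert\,\tau$ and to the conditioning of $P$; the bulk contribution is exactly $-\eta V$. Both terms are of the \emph{same} order in $\eta$, so ``domination'' holds only if $C<1$, i.e.\ under an additional smallness condition of the type $\lvert\sigma^+\sigma^-\rvert\tau^2<1$ that is not among the hypotheses (and is violated, e.g., by the parameter set $(2.3,-3.5,0.8,1.1,0.5,-0.7)$ used in the paper). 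This is precisely the difficulty of the merely \emph{semi}definite case: had $M^\top P+PM$ been positive definite, the cross term would be absorbed by an $O(1)$ negative interior term and the argument would close; with semidefiniteness the interior contribution at $\eta=0$ is exactly zero and the $O(\eta)$ perturbation competes on equal footing with the $O(\eta)$ dissipation. The $O(1)$ margin coming from the strict boundary inequality cannot rescue this either, since it controls only the traces $u(t,1)$ and $v(t,0)$ and not the interior integral $\int_0^1\lvert uv\rvert\,\diff x$. To repair the proof one must either add such a smallness hypothesis or follow the more delicate construction of the weight in \cite{bastin2016stability}; as written, the step fails.
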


Using the fact that the stability of the system is equivalent to having $\lvert \Delta(s)\rvert>0$ for all $s \in \mathbb{C}$ such that $\real(s)\geq 0$, the following sufficient stability condition was obtained in~\cite{saba2019stability}.

\begin{proposition}[{\cite[Proposition~3]{saba2019stability}}]
\label{prop:criterion-jean-et-al}
Assume that the coefficients $\sigma^+$ and $\sigma^{-}$ are constants. If the constant parameters of system \eqref{eq:hyperbolic_couple} satisfy either of the following set of inequalities
\begin{enumerate}
\item $\sigma^+ \sigma^{-} \geq 0$, $\rho q \geq 0$, and 
\[\lvert a\rvert+\lvert R\rvert\left(\frac{1}{1+\lvert \rho q\rvert}-\frac{1-\lvert \rho q\rvert}{2}\right)<1-\lvert \rho q\rvert;\]
\item $\sigma^+ \sigma^{-} \geq 0$, $\rho q < 0$, and 
\[\lvert a\rvert+\lvert R\rvert \frac{1+\lvert \rho q\rvert}{2}<1-\lvert \rho q\rvert;\]
\item $\sigma^+ \sigma^{-} < 0$, $\rho q \geq 0$, and
\[\lvert a\rvert I_0(\sqrt{\lvert R\rvert})+\lvert R\rvert\left(\frac{1}{1+\lvert \rho q\rvert}-\frac{1-\lvert \rho q\rvert}{2}\right) \left[I_0(\sqrt{\lvert R\rvert})-I_2(\sqrt{\lvert R\rvert})\right]<1-\lvert \rho q\rvert;\]
\item $\sigma^+ \sigma^{-} <0$, $\rho q<0$, and
\[ \lvert a\rvert I_0(\sqrt{\lvert R\rvert})+\lvert R\rvert \frac{1+\lvert \rho q\rvert}{2} \left[I_0(\sqrt{\lvert R\rvert})-I_2(\sqrt{\lvert R\rvert})\right]<1-\lvert \rho q\rvert;\]
\end{enumerate}
where $I_0$ and $I_2$ are modified Bessel functions of the first kind (see, e.g., \cite[Section~10.25]{olver2010nist}), then system \eqref{eq:hyperbolic_couple} is exponentially stable for the $L^2$ norm.
\end{proposition}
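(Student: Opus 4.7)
The plan is to apply the spectral characterization of exponential stability recalled just before Lemma~\ref{necessary_open} and prove that, under each of the four hypothesis sets, $\Delta$ (with $\xi=q\rho$) admits no zero in the closed right half-plane, with a uniform lower bound that extends to $\{\real s\geq -\eta\}$ for some $\eta>0$. For $\real s\geq 0$, the triangle inequality applied to \eqref{characteristic equation} yields
\begin{equation*}
|\Delta(s)|\geq 1-|q\rho|-\int_0^\tau |N(\nu)|\,\diff\nu,
\end{equation*}
so everything reduces to showing that $\int_0^\tau |N(\nu)|\,\diff\nu<1-|q\rho|$; the four hypothesis sets are recognized as explicit upper bounds on the left-hand side.

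The key step is a compact rewriting of $N$. Using the Bessel recurrence $J_0(z)+J_2(z)=\frac{2}{z}J_1(z)$, formula \eqref{eq:explicit-N-Bessel} rewrites as
\begin{equation*}
N(\nu)=\frac{a}{\tau}J_0\bigl(2\sqrt{h(\nu)}\bigr)+\frac{d(\nu)}{\tau^2}\,\frac{J_1\bigl(2\sqrt{h(\nu)}\bigr)}{\sqrt{h(\nu)}}.
\end{equation*}
For $R\geq 0$, the elementary bounds $|J_0|\leq 1$ and $|J_1(x)/x|\leq \tfrac12$ on $\mathbb R$ (the latter being immediate from the power series of $J_1$) yield $|N(\nu)|\leq |a|/\tau+|d(\nu)|/\tau^2$. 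For $R<0$, the identities $J_0(iy)=I_0(y)$ and $J_1(iy)=iI_1(y)$ turn the two Bessel factors into positive, even, monotonically increasing functions of $|h(\nu)|$, whose maxima on $[0,\tau]$ occur at $\nu=\tau/2$ where $2\sqrt{|h(\nu)|}=\sqrt{|R|}$; the modified-Bessel recurrence $I_0-I_2=\frac{2I_1}{z}$ then identifies those maxima with $I_0(\sqrt{|R|})$ and $I_0(\sqrt{|R|})-I_2(\sqrt{|R|})$, respectively.

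The last piece is the integration of $|d(\nu)|$, where $d(\nu)=R(\tau-\nu b)$ with $b=1+q\rho$. Under $|q\rho|<1$, if $q\rho\geq 0$ then $b\geq 1$ and $d$ changes sign at $\tau/b\in(0,\tau]$, while if $q\rho<0$ then $b\in(0,1)$ and $d$ keeps the sign of $R$ on $[0,\tau]$. Straightforward integration, combined with the elementary identities $\frac{b^2-2b+2}{2b}=\frac{1}{1+|q\rho|}-\frac{1-|q\rho|}{2}$ when $b=1+|q\rho|$ and $\frac{2-b}{2}=\frac{1+|q\rho|}{2}$ when $b=1-|q\rho|$, gives
\begin{equation*}
\tfrac{1}{\tau^2}\!\int_0^\tau|d(\nu)|\,\diff\nu=|R|\Bigl(\tfrac{1}{1+|q\rho|}-\tfrac{1-|q\rho|}{2}\Bigr)\text{ if }q\rho\geq 0,\quad \tfrac{1}{\tau^2}\!\int_0^\tau|d(\nu)|\,\diff\nu=|R|\tfrac{1+|q\rho|}{2}\text{ if }q\rho<0.
\end{equation*}
Substituting these two values together with the Bessel/modified-Bessel bounds of the previous paragraph into the lower bound on $|\Delta(s)|$ recovers exactly conditions~(1)--(2) for $R\geq 0$ and conditions~(3)--(4) for $R<0$.

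The main technical obstacle is the recurrence trick: a naive triangle inequality applied to the original form $\bigl(\tfrac{a}{\tau}+\tfrac{d(\nu)}{\tau^2}\bigr)J_0+\tfrac{d(\nu)}{\tau^2}J_2$ introduces a spurious factor of $2$ on $|d(\nu)|/\tau^2$ and strictly weakens all four sufficient conditions. Recognizing that $J_0+J_2=2J_1/z$ (and its modified-Bessel analogue) collapses the two Bessel contributions into a single sharper term is what yields the precise coefficients $\frac{1}{1+|q\rho|}-\frac{1-|q\rho|}{2}$, $\frac{1+|q\rho|}{2}$, and the scaling $I_0(\sqrt{|R|})-I_2(\sqrt{|R|})$ stated; the remaining work is a purely computational case split on the sign of $d(\nu)$.
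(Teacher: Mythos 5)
Your argument is correct, but note that the paper itself offers no proof of this statement: Proposition~\ref{prop:criterion-jean-et-al} is recalled verbatim from \cite[Proposition~3]{saba2019stability} purely for comparison with Corollary~\ref{result:eqhyp}, so the ``paper's proof'' is a citation. What you have written is a self-contained reconstruction that follows the same strategy as that reference: reduce exponential stability to the bound $\int_0^\tau \lvert N(\nu)\rvert\,\diff\nu < 1-\lvert q\rho\rvert$ via the triangle inequality applied to \eqref{characteristic equation} (which indeed yields a positive lower bound on $\lvert\Delta\rvert$ persisting on a strip $\real s\geq-\eta$), and then estimate $\int_0^\tau\lvert N\rvert$ from the explicit Bessel form \eqref{eq:explicit-N-Bessel}. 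Your key steps all check out: the recurrences $J_0(z)+J_2(z)=2J_1(z)/z$ and $I_0(z)-I_2(z)=2I_1(z)/z$ are exactly what produce the stated constants; the bound $\lvert 2J_1(x)/x\rvert\leq 1$ handles $R\geq 0$, while the monotonicity of $I_0(x)$ and of $2I_1(x)/x$ together with $\max_{\nu\in[0,\tau]} 2\sqrt{\lvert h(\nu)\rvert}=\sqrt{\lvert R\rvert}$ handles $R<0$; and the integration of $\lvert\tau-\nu b\rvert$ gives $\tau^2(b^2-2b+2)/(2b)=\tau^2\bigl(\tfrac{1}{1+\lvert q\rho\rvert}-\tfrac{1-\lvert q\rho\rvert}{2}\bigr)$ for $b=1+\lvert q\rho\rvert$ and $\tau^2(2-b)/2=\tau^2\,\tfrac{1+\lvert q\rho\rvert}{2}$ for $b=1-\lvert q\rho\rvert$, which are precisely the coefficients appearing in items (1)--(4). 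The only imprecision is the claim that $\lvert J_1(x)/x\rvert\leq\tfrac12$ is ``immediate from the power series'': the series alternates, so termwise bounding does not suffice; invoke instead the standard estimate $\lvert J_n(x)\rvert\leq\lvert x/2\rvert^n/n!$ for real $x$ (obtained, e.g., from Poisson's integral representation).
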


Let us also mention that, using recent ISS results and a small-gain property, another sufficient stability condition can be found in~\cite{karafyllisinput}. When applied to system~\eqref{eq:hyperbolic_couple}, this condition requires the existence of a constant $K>0$, with $(\lvert \rho\rvert+\lvert q\rvert)\mathrm{e}^{-K}<1$ such that
\begin{equation}
\label{eq:criterion-iss}
   \left(\sqrt{\frac{\mathrm{e}^{2K}-\mathrm{e}^K}{K\lambda}\lvert \sigma^-\rvert}+\sqrt{\lvert \rho\rvert}\right)\left(\sqrt{\frac{\mathrm{e}^{2K}-\mathrm{e}^K}{K\mu}\lvert \sigma^+\rvert}+\sqrt{\lvert q\rvert}\right)<1.
\end{equation}
However, it has been shown in~\cite{saba2019stability} that this condition is not satisfied for numerous examples (and in particular, the ones we consider below). We refer to \cite[Section~V]{saba2019stability} for a more detailed comparison between \cite[Proposition~3]{saba2019stability} and \eqref{eq:criterion-iss}.

\begin{table}[ht] 
\centering
\footnotesize
\caption{Comparison with other criteria}
\label{tab:comparison}
\begin{tabular}{@{\hspace*{0.01\textwidth}} >{\centering} m{0.34\textwidth} @{\hspace*{0.02\textwidth}} >{\centering} m{0.22\textwidth} @{\hspace*{0.02\textwidth}} >{\centering} m{0.15\textwidth} @{\hspace*{0.02\textwidth}} >{\centering} m{0.15\textwidth} @{\hspace*{0.01\textwidth}}}
\toprule
Example of system \eqref{eq:hyperbolic_couple} \\ 
$\bigl(\sigma^+,\allowbreak \sigma^-,\allowbreak \frac{1}{\lambda},\allowbreak \frac{1}{\mu},\allowbreak \rho,\allowbreak q\bigr)$
 & Conditions of Corollary~\ref{result:eqhyp} & Conditions of Proposition~\ref{prop:criterion-bastin-coron} & Conditions of Proposition~\ref{prop:criterion-jean-et-al} \tabularnewline
\midrule
$(1.1,\allowbreak 0.4,\allowbreak 1,\allowbreak 1.2,\allowbreak 0.4,\allowbreak -0.5)$ & Satisfied & Not satisfied & Satisfied \tabularnewline
$(-0.8,\allowbreak 0.7,\allowbreak 1,\allowbreak 1.2,\allowbreak 0.4,\allowbreak 0.25)$ & Satisfied & Satisfied & Satisfied \tabularnewline
$(1.3,\allowbreak -0.95,\allowbreak 1.8,\allowbreak 0.44,\allowbreak 0.45,\allowbreak 0.25)$ & Satisfied & Not satisfied & Satisfied \tabularnewline
$(1.3,\allowbreak -1.2,\allowbreak 1.8,\allowbreak 1.5,\allowbreak 0.45,\allowbreak 0.25)$ & Satisfied & Satisfied & Not satisfied \tabularnewline
$(2.3,\allowbreak -3.5,\allowbreak 0.8,\allowbreak 1.1,\allowbreak 0.5,\allowbreak -0.7)$ & Satisfied & Not satisfied & Not satisfied \tabularnewline
\bottomrule
\end{tabular}
\end{table}

To compare our criterion with Propositions~\ref{prop:criterion-bastin-coron} and \ref{prop:criterion-jean-et-al} above, we selected five sets of parameters $\sigma^+$, $\sigma^-$, $\lambda$, $\mu$, $\rho$, and $q$, given in the first column of Table~\ref{tab:comparison}. We selected the same sets of parameters as in \cite[Table~II]{saba2019stability} as they provide showcase scenarios for which the criteria specified in \cite[Theorem~5.4]{bastin2016stability} and \cite[Proposition~3]{saba2019stability} are not always satisfied.

To verify our criterion from Corollary~\ref{result:eqhyp}, we first verify that the inequalities $\lvert q \rho \rvert < 1$ and \eqref{nece_suf} are satisfied. Then, to compute the quantity $\Gamma$ from \eqref{2cond}, we first compute numerically the positive zeros $\rho_1, \dotsc, \rho_m$ of the function $M$ given by $M(\omega) = \real (\Delta(i \omega))$. In all the examples from Table~\ref{tab:comparison}, the inequalities $\lvert q \rho \rvert < 1$ and \eqref{nece_suf} are satisfied, and $M$ turns out to have no real roots, which immediately implies that $\Delta$ has no zeros on the imaginary axis, $\Gamma = 0$, and hence system \eqref{eq:hyperbolic_couple} is exponentially stable by Corollary~\ref{result:eqhyp}. Our verifications of these conditions were performed using MATLAB, using the built-in Bessel function and numerical integration. As an example, with the parameters $\left(\sigma^+, \sigma^-, \frac{1}{\lambda}, \frac{1}{\mu}, \rho, q\right) = (2.3, -3.5, 0.8, 1.1, 0.5, -0.7)$ (corresponding to the last row of Table~\ref{tab:comparison}), we get $\int_0^\tau N(\nu) \diff\nu \approx 1.3143 < 1 + 0.5 \times 0.7 = 1.35$ and, from the plot of the function $M$ shown in Figure~\ref{M_plot}, we observe that $M$ has no positive real root (note that $M(0)\approx 0.0357 \neq 0$). Therefore, $\Gamma = 0$ and, by Corollary~\ref{coro:stability}, the system is exponentially stable for these parameters.

\begin{figure}[ht]
\centering
\includegraphics[width=0.7\textwidth]{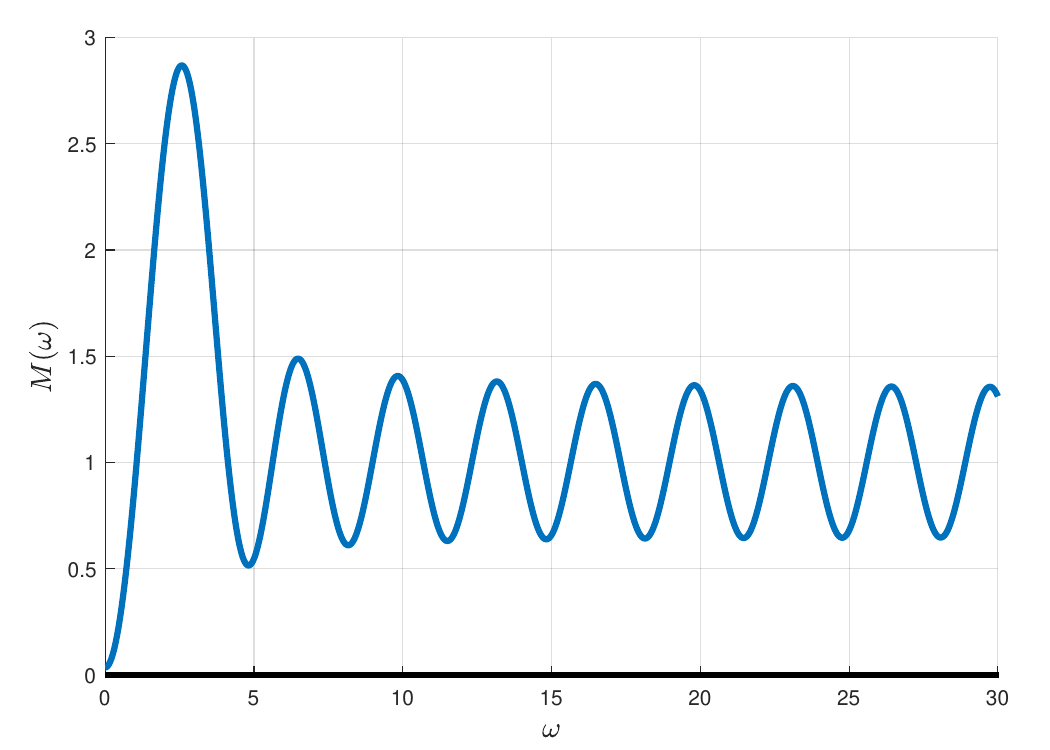}
\caption{Graph of the function $M$ when $\left(\sigma^+, \sigma^-, \frac{1}{\lambda}, \frac{1}{\mu}, \rho, q\right)=(2.3,-3.5,0.8,1.1,0.5,-0.7)$.}
\label{M_plot}
\end{figure}

To illustrate this exponential stability property, we represent in Figure~\ref{fig_L2_norm} the evolution in time of the $L^2$ norm in space of the solution $(u, v)$ of~\eqref{eq:hyperbolic_couple} with the parameters $\left(\sigma^+, \sigma^-, \frac{1}{\lambda}, \frac{1}{\mu}, \rho, q\right) = (2.3, -3.5, 0.8, 1.1, 0.5, -0.7)$.
We simulated our system with a time horizon of $500$, a standard first-order upwind scheme for the discretization of the space derivative, a time step of $\Delta t = 10^{-4}$, and different space discretization steps $\Delta x_u$ and $\Delta x_v$ for $u$ and $v$ in such a way that the CFL conditions $\frac{\lambda \Delta t}{\Delta x_u} \leq 1$ and $\frac{\mu \Delta t}{\Delta x_v} \leq 1$ are satisfied as close as possible to the equality. The initial condition $u_0$ was chosen as a constant equal to $1$, while the initial condition $v_0$ was chosen as the unique affine function such that the zero-order compatibility condition \eqref{compatibility_condition_u_v} is fulfilled. The result of the simulation, presented in Figure~\ref{fig_L2_norm}, show the exponential convergence to the origin of the considered solution in $L^2$ norm, as expected.

\begin{figure}[htp]
\centering
\resizebox{0.6\textwidth}{!}{\input{Figures/norm_L2_solution.pgf}}
\caption{$L^2$ norm of the solution $(u,v)$ of~\eqref{eq:hyperbolic_couple} with parameters $\left(\sigma^+, \sigma^-, \frac{1}{\lambda}, \frac{1}{\mu}, \rho, q\right)=(2.3,\allowbreak -3.5,\allowbreak 0.8,\allowbreak 1.1,\allowbreak 0.5,\allowbreak -0.7)$.}
\label{fig_L2_norm}
\end{figure}

As the conditions from Corollary~\ref{result:eqhyp} are necessary and sufficient, we are able to conclude on the stability of systems for which the criteria from Propositions~\ref{prop:criterion-bastin-coron} or \ref{prop:criterion-jean-et-al} are not satisfied, as illustrated in Table~\ref{tab:comparison}. In addition, despite the potential difficulties in the computation of the zeros of $M$ and of the value of $\Gamma$, a numerical test of our necessary and sufficient condition from Corollary~\ref{result:eqhyp} seems less complex and easier to verify than the numerical test for the necessary and sufficient condition in \cite[Condition~(31)]{saba2019stability}. 

\section{Numerical explorations of truncations of the function $N$} \label{sec_approximation}

One of the difficulties in the analysis of the case where $\sigma^+$ and $\sigma^-$ are constant is that the function $N$ from \eqref{eq:explicit-N-Bessel} depends on the Bessel functions $J_0$ and $J_2$, which are transcendental functions. On the other hand, when the function $N$ from \eqref{characteristic equation} is a polynomial, the function $\Delta$ becomes a quasipolynomial (more precisely, a quasipolynomial divided by a power of $s$), whose properties are easier to analyze. A natural idea to simplify the analysis in the case where $\sigma^+$ and $\sigma^-$ are constant is thus to approximate $N$ by truncating the series in \eqref{eq:explicit-N-series} and study the quasipolynomial $\Delta$ obtained by this procedure. The aim of this section is to provide numerical explorations around this idea.

For $p \in \N$, let $N_p$ denote the function obtained by truncating the sums in \eqref{eq:explicit-N-series} so that each sum has only its first $p+1$ terms, i.e.,
\begin{equation*}
N_p(\nu) = \left(\frac{a}{\tau} + \frac{d(\nu)}{\tau^2}\right) \sum_{p^\prime=0}^{p} \frac{(-1)^{p^\prime}}{(p^\prime !)^2} \left(h(\nu)\right)^{p^\prime} + \frac{d(\nu)}{\tau^2} \sum_{p^\prime=0}^{p} \frac{(-1)^{p^\prime}}{p^\prime ! (p^\prime + 2) !} \left(h(\nu)\right)^{p^\prime + 1},  \quad \nu \in[0, \tau].
\end{equation*}
Note that
\begin{equation}\label{expression N0}
N_0(\nu)=\frac{R^2b}{2\tau^4}\nu^3-\frac{R^2(1+b)}{2\tau^3}\nu^2+\left(\frac{R^2}{2\tau^2}-\frac{Rb}{\tau^2}\right)\nu+\frac{a+R}{\tau}
\end{equation}
and that, for $p \in \N$, we have the recurrence relation
\begin{equation}\label{expression Np+1}
N_{p+1}(\nu)= N_p(\nu) + f_p(\nu),
\end{equation}
where
\begin{equation}\label{expression fp}
f_p(\nu) = (-1)^{p+1}\Bigg(\left(\frac{a}{\tau}+\frac{d(\nu)}{\tau^2}\right) \frac{(h(\nu))^{p+1}}{((p+1)!)^2}+\frac{(h(\nu))^{p+2}d(\nu)}{\tau^2(p+1)!(p+3)!}\Bigg)
\end{equation}
and  $h$, $d$, $a$, $R$, and $b$ are defined as in Section~\ref{sec_comparaison}.

For $p \in \N$, let $\Delta_p$ be the characteristic function associated with $N_p$, i.e., we set
\begin{equation}
\label{eq:Delta-p}
\Delta_{p}(s)= 1-q\rho \mathrm{e}^{-s \tau}-\int_0^\tau N_{p}(\nu) \mathrm{e}^{-s \nu} \diff \nu.
\end{equation}
Rewriting $N_p$ as $N_p(\nu)=\sum_{k=0}^{2p+3}a_k\nu^k$ for suitable coefficients $a_0, \dotsc, a_{2 p + 3}$, then, for $s\neq 0$, we can rewrite $\Delta_p$ as $\Delta_p(s)=P_0(s)+P_1(s)\mathrm{e}^{-s \tau}$ with
\begin{equation}
\label{eq:P0P1truncation}
\left\{
\begin{aligned}
P_0(s) & = \frac{s^{2p+4}-\sum^{2p+3}_{k=1}(2p+3-k)!a_{2p+3-k}s^k-(2p+3)!a_{2p+3}}{s^{2p+4}}, \\
P_1(s) & = \frac{-q\rho s^{2p+4}+\sum^{2p+3}_{k=0}s^k N_p^{(2p + 3 - k)}(\tau)}{s^{2p+4}}.
\end{aligned}
\right.
\end{equation}

\subsection{Investigation of imaginary roots}

As Corollary~\ref{result:eqhyp} requires one to check whether the function $\Delta$ admits roots on the imaginary axis, we now describe in more detail some conditions for the existence of imaginary roots of the function $\Delta_p$ obtained by the truncation $N_p$ of $N$.

\subsubsection{The case $p=0$}

Let us obtain a necessary condition for the existence of nontrivial roots in the imaginary axis for $\Delta_0$. For $\omega \in \R$, the imaginary number $i \omega$ is a root of $\Delta_0$ if and only if
\[
P_0(i\omega)\mathrm{e}^{i\omega\tau} = - P_1(i\omega),
\]
where $P_0$ and $P_1$ are as in \eqref{eq:P0P1truncation} with $p = 0$. The latter condition implies that
\[
\lvert  P_0(i\omega)\rvert^2=\lvert  P_1(i\omega)\rvert^2,
\]
which, using the expressions from \eqref{eq:P0P1truncation}, is equivalent to
\begin{equation}
\label{egalté P_P1}
\omega^4\Bigg((1-q^2\rho^2)\omega^{4}+\omega^{2}\left( a_0^2+2a_1-N_0(\tau)^2-2q\rho N_0'(\tau)\right)-12a_3\Delta_0(0)\Bigg)=0.
\end{equation}
If we assume that $N_0$ satisfies \eqref{nece_suf} then, $\Delta_0(0)>0$. Consequently, we obtain that the solutions of~\eqref{egalté P_P1} verify $\omega\neq 0$. Moreover, according to \eqref{q_rho}, we have $a_3\geq 0$. Then, we obtain from \eqref{egalté P_P1} that
\begin{equation}
\label{eq:explicit-omega-0}
\omega=\pm\sqrt{\frac{-\left( a_0^2+2a_1-N_0(\tau)^2-2q\rho N_0'(\tau)\right)+\sqrt{D}}{2(1-q^2\rho^2)}}
\end{equation}
with
\[
D=\left( a_0^2+2a_1-N_0(\tau)^2-2q\rho N_0'(\tau)\right)^2+48a_3(1-q^2\rho^2)\Delta_0(0).
\]
This means that, if \eqref{nece_suf} is satisfied for $N_0$, then $\Delta_0$ has at most two roots on the imaginary axis, and these roots can only be the ones from \eqref{eq:explicit-omega-0}. Depending on the parameter values, it may have either exactly two imaginary roots or no imaginary roots.

\subsubsection{The case $p=1$}

Proceeding as previously, a straightforward computation shows that, if $\omega\in \R$ is such that $\Delta(i \omega) = 0$, then necessarily
\begin{multline}
\label{or1_egalté P_P1}
\omega^6 \Biggl( \Bigl(1-q^2\rho^2\Bigr) \omega^{6} + \Bigl(a_0^2+2a_1-N_1(\tau)^2-2q\rho N_1'(\tau)\Bigr) \omega^{4} \\
 + \Bigl(a_1^2-12a_3-4a_0a_2-N_1'(\tau)^2+2q\rho N_1^{'''}(\tau)+2N_1(\tau)N_1{''}(\tau)\Bigr)\omega^2 + 240\Delta_1(0)\Biggr)=0,
\end{multline}
with
\begin{equation}
\begin{aligned}
a_0 & = \frac{a+R}{\tau}, & \quad a_1 & = -\frac{R^2+2R(a+b)}{2\tau^2}, \\
a_2 & = \frac{6aR+3R^2(b+1)-R^3}{6\tau^3}, & a_3 & = \frac{-3R^2b+R^3(b+2)}{6\tau^4}.
\end{aligned}
\end{equation}

Assume that \eqref{q_rho} and \eqref{nece_suf} are satisfied with $N$ replaced by the truncation $N_1$. If $\delta<0$, then $\Delta$ has at most 2 roots on the imaginary axis, where
\begin{equation}
   \left\{ \begin{aligned}
   &\delta=18ABCE-4B^3E+B^2C^2-4AC^3-27A^2E^2,\\
    &A=1-q^2\rho^2,\, B= a_0^2+2a_1-N_1(\tau)^2-2q\rho N_1'(\tau),\\
    &C=a_1^2-12a_3-4a_0a_2-N_1'(\tau)^2+2q\rho N_1^{'''}(\tau)+2N_1(\tau)N_1{''}(\tau),\\
    &E=240\Delta_1(0).
    \end{aligned}\right.
\end{equation} In addition, if $Q\geq 0$, then $\Delta$ has no root on the imaginary axis, where
\begin{equation}
    Q=\frac{B}{27 A}\left(\frac{2 B^2}{A^2}-\frac{9 C}{A}\right)+\frac{E}{A} .
\end{equation}

With these first two truncations, we can conclude that finding the roots of $\Delta_p$ on the imaginary axis amounts to searching for the roots of a polynomial with real coefficients and finite degrees, which constitutes an algebraic problem.

\subsection{Stability of the truncated system}

We now numerically compare the stability properties of \eqref{distributed delay} when $N$ is given by \eqref{eq:explicit-N-Bessel} or by one of its truncations $N_p$, $p \in \mathbb N$.

\begin{figure}[htp]
\centering
\includegraphics[width=0.7\textwidth]{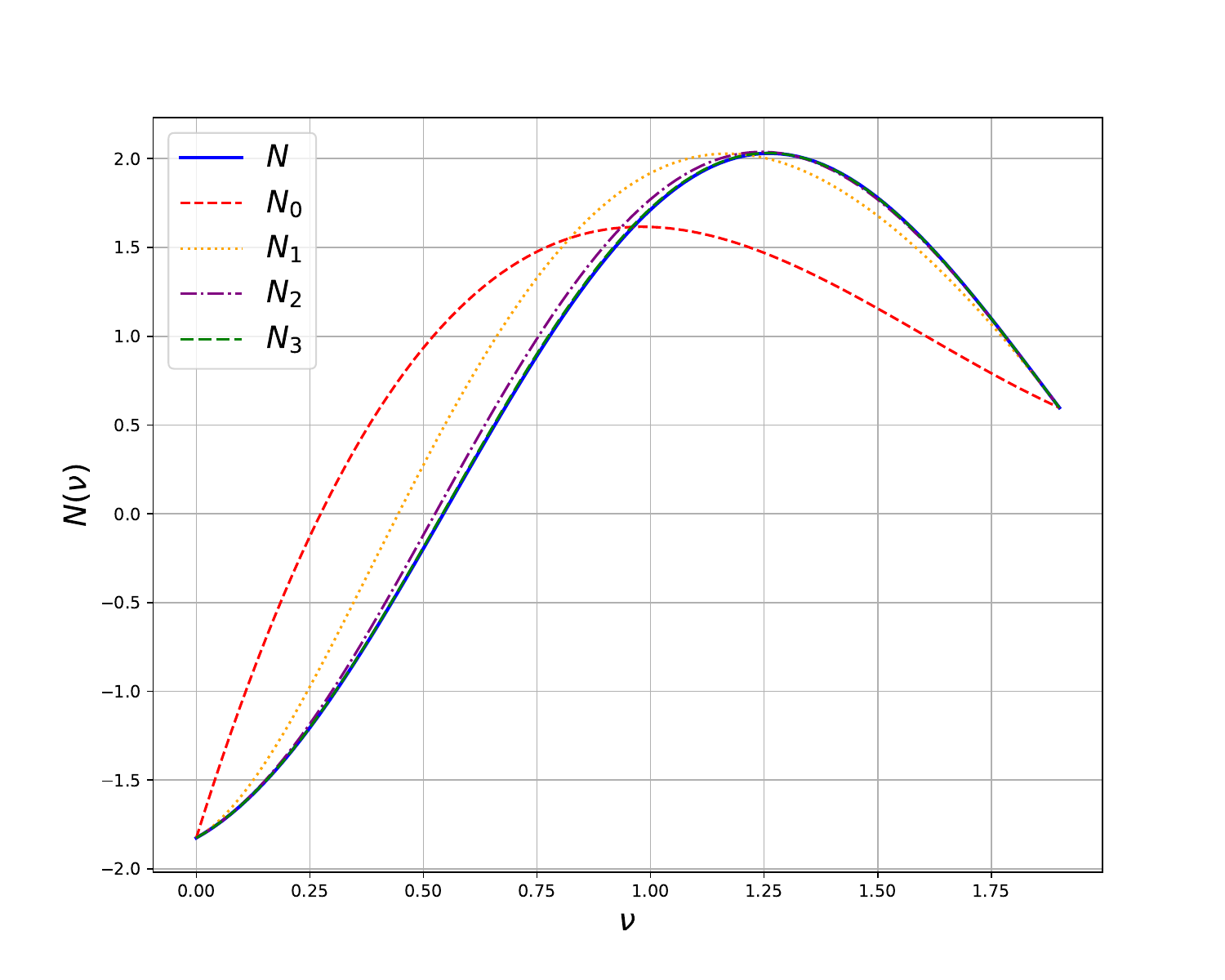}
\caption{$N$, $N_0$, $N_1$, $N_2$, and $N_3$ with $\Bigl(\sigma^+, \sigma^-, \frac{1}{\lambda}, \frac{1}{\mu}, \rho, q\Bigr)=(2.3,-3.5,0.8,1.1,0.5,-0.7)$.}
\label{fig:_2}
\end{figure}

\begin{figure}[htp]
\centering
\includegraphics[width=0.7\textwidth]{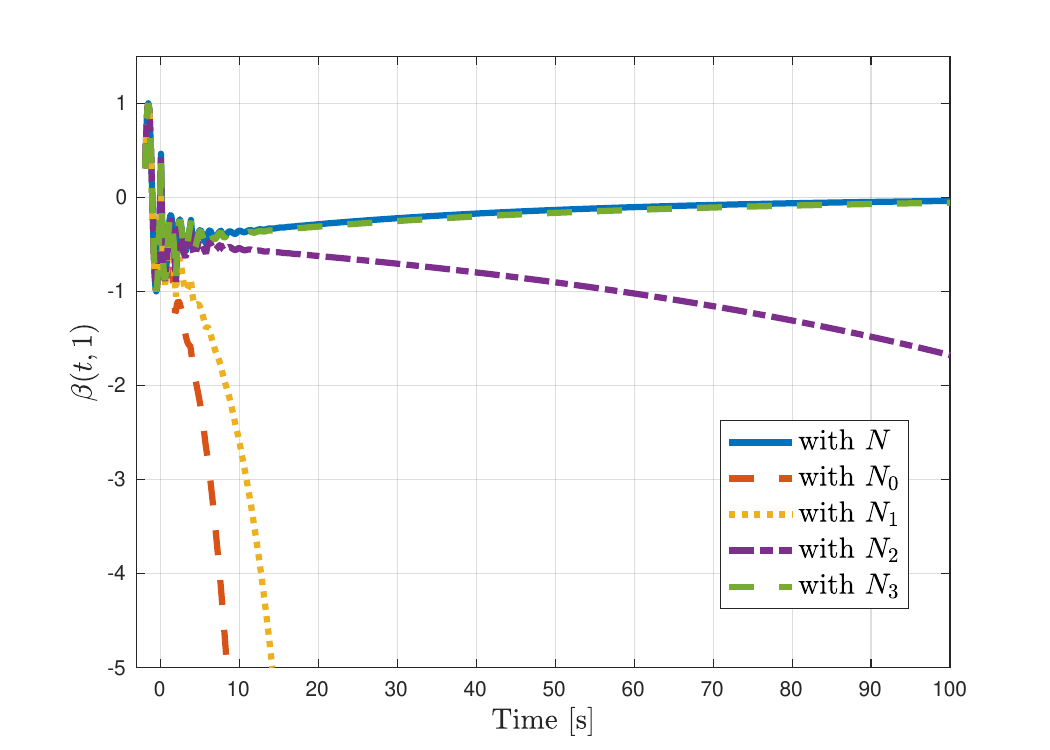}
\caption{Solution of \eqref{distributed delay} with $N$, $N_0$, $N_1$, $N_2$, and $N_3$ with $\Bigl(\sigma^+,\allowbreak \sigma^-,\allowbreak \frac{1}{\lambda},\allowbreak \frac{1}{\mu},\allowbreak \rho,\allowbreak q\Bigr)=(2.3,\allowbreak -3.5,\allowbreak 0.8,\allowbreak 1.1,\allowbreak 0.5,\allowbreak -0.7)$.}
\label{fig:_3}
\end{figure}

We start by considering \eqref{distributed delay} with parameters $\Bigl(\sigma^+,\allowbreak \sigma^-,\allowbreak \frac{1}{\lambda},\allowbreak \frac{1}{\mu},\allowbreak \rho,\allowbreak q\Bigr)=(2.3,\allowbreak -3.5,\allowbreak 0.8,\allowbreak 1.1,\allowbreak 0.5,\allowbreak -0.7)$. For this set of parameters, Figure~\ref{fig:_2} shows the function $N$ and its truncations $N_0$, $N_1$, $N_2$, $N_3$. We observe that these low-order approximations are already visually close to the graph of $N$, in particular $N_2$ and $N_3$. Solutions of \eqref{distributed delay} with initial condition $\beta(t, 1) =\sin (\pi t)$ for $t \in [-\tau, 0]$ for $N$ and for the truncations $N_0$, $N_1$, $N_2$, and $N_3$ are represented in Figure~\ref{fig:_3}, in which we observe that these solutions diverge for the truncations $N_0$, $N_1$, and $N_2$, but converge to $0$ for $N_3$ and $N$. Indeed, \eqref{nece_suf} is not satisfied for $N_0$, $N_1$ and $N_2$, since
\begin{align*}
\int_0^\tau N_0(\nu) \diff \nu & \approx 1.6561>1.35=1-q\rho, \\
\int_0^\tau N_1(\nu) \diff \nu & \approx 1.6117>1.35=1-q\rho, \\
\intertext{and}
\int_0^\tau N_2(\nu) \diff \nu & \approx 1.3768>1.35=1-q\rho.
\end{align*}

\begin{figure}[htp]
\centering
\includegraphics[width=0.7\textwidth]{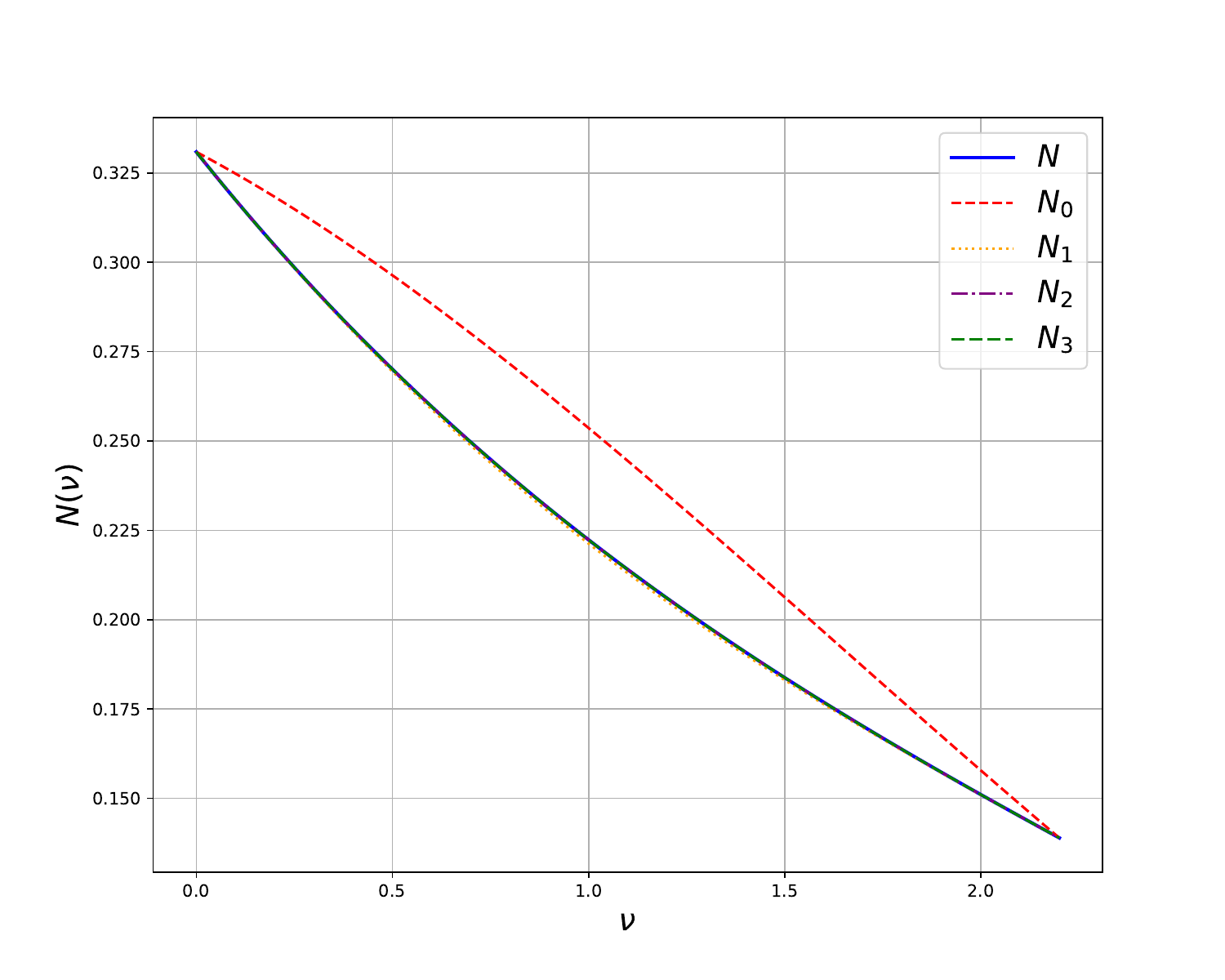}
\caption{$N$, $N_0$, $N_1$, $N_2$, and $N_3$ with $\Bigl(\sigma^+, \sigma^-, \frac{1}{\lambda}, \frac{1}{\mu}, \rho, q\Bigr)=(1.1,0.4,1,1.2,0.4,-0.5)$.}
\label{fig:_4}
\end{figure}

\begin{figure}[htp]
\centering
\includegraphics[width=0.7\textwidth]{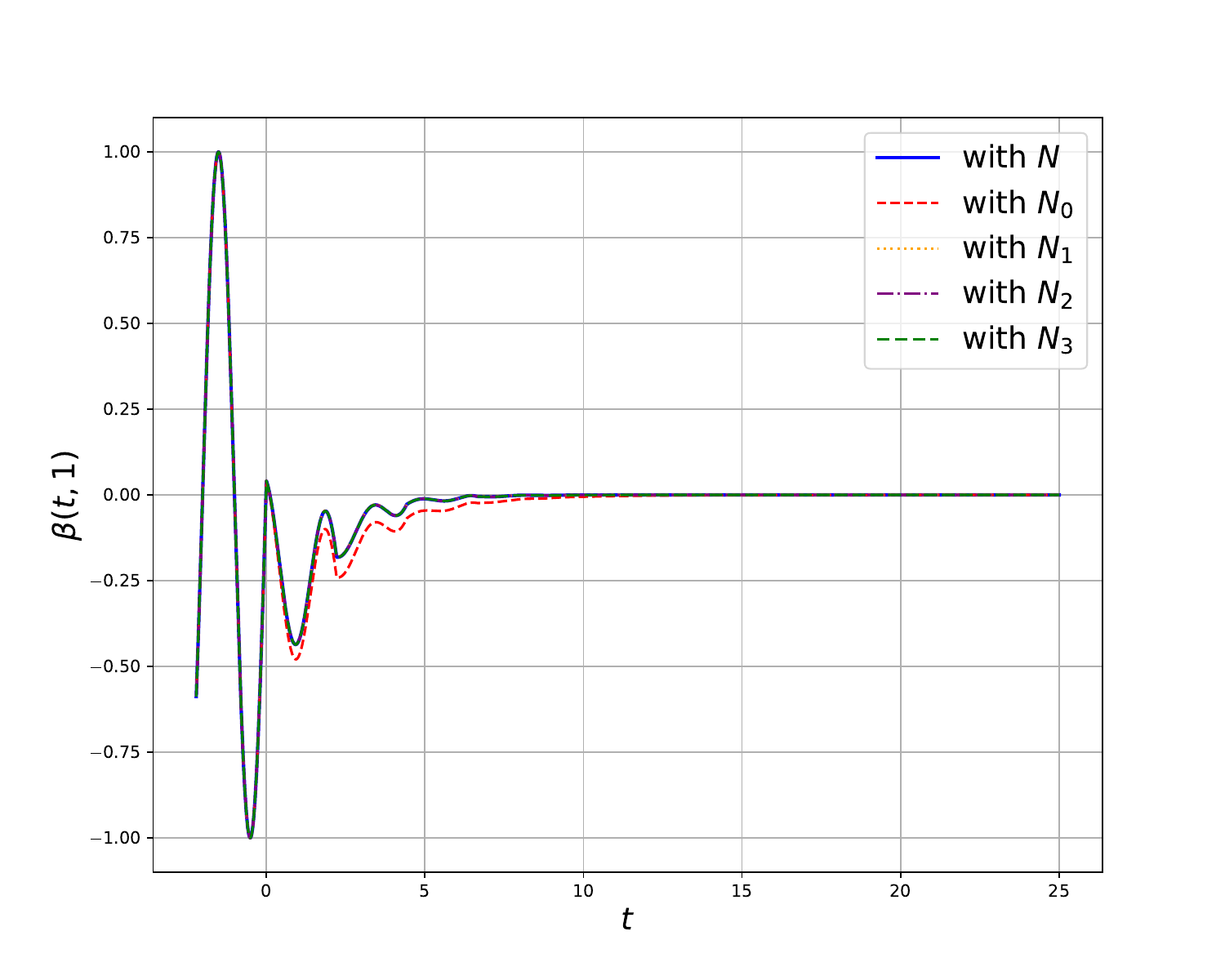}
\caption{Solution of \eqref{distributed delay} with $N$, $N_0$, $N_1$, $N_2$, and $N_3$ with $\Bigl(\sigma^+,\allowbreak \sigma^-,\allowbreak \frac{1}{\lambda},\allowbreak \frac{1}{\mu},\allowbreak \rho,\allowbreak q\Bigr)=(1.1,\allowbreak 0.4,\allowbreak 1,\allowbreak 1.2,\allowbreak 0.4,\allowbreak -0.5)$.}
\label{fig:_5}
\end{figure}

We also consider \eqref{distributed delay} with parameters $\Bigl(\sigma^+,\allowbreak \sigma^-,\allowbreak \frac{1}{\lambda},\allowbreak \frac{1}{\mu},\allowbreak \rho,\allowbreak q\Bigr)=(1.1,\allowbreak 0.4,\allowbreak 1,\allowbreak 1.2,\allowbreak 0.4,\allowbreak -0.5)$, representing the corresponding functions $N$, $N_0$, $N_1$, $N_2$, and $N_3$ in Figure~\ref{fig:_4} and the solutions of \eqref{distributed delay} with the same initial condition as before for these functions in Figure~\ref{fig:_5}. We observe that, for these parameters, a lower-order truncation already provides a good approximation for $N$, and all represented solutions converge, even for the lowest-order truncation $N_0$ of $N$. For this set of parameters, we also represent in Figure~\ref{fig:_6} the roots of the characteristic function $\Delta$ from \eqref{characteristic equation} for $N$, $N_0$, and $N_1$, which also confirm the stability observed in Figure~\ref{fig:_5}.

\begin{figure}[htp]
\centering
\includegraphics[width=0.7\textwidth]{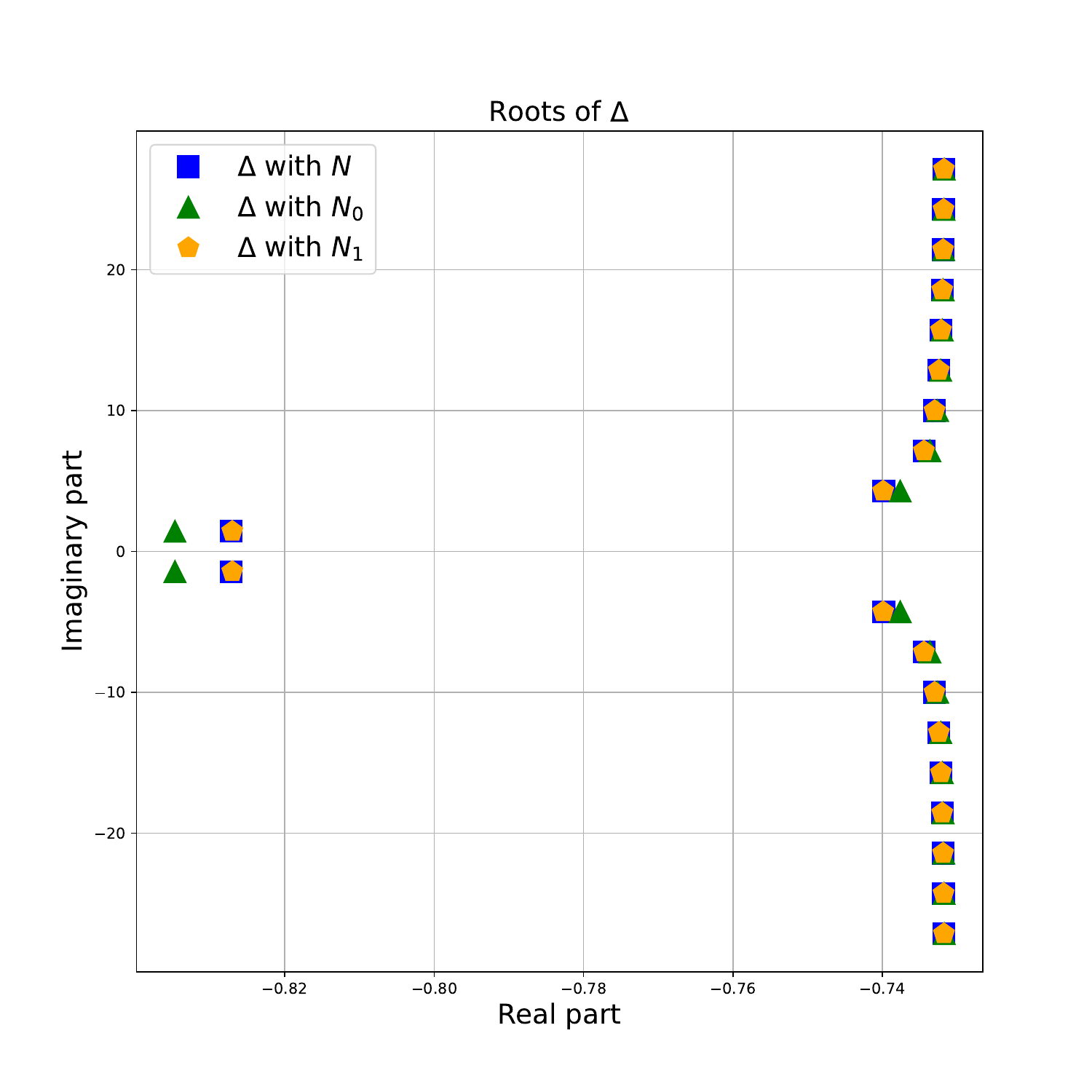}
\caption{Roots of $\Delta$ with with $N$, $N_0$, and $N_1$ with $\Bigl(\sigma^+, \sigma^-, \frac{1}{\lambda}, \frac{1}{\mu}, \rho, q\Bigr)=(1.1,\allowbreak 0.4,\allowbreak 1,\allowbreak 1.2,\allowbreak 0.4,\allowbreak -0.5)$.}
\label{fig:_6}
\end{figure}

We conclude this section with the following result, which provides a sufficient stability criterion for \eqref{eq:hyperbolic_couple} and \eqref{distributed delay} with constant parameters $\sigma^+$ and $\sigma^-$ (and hence, with $N$ given by \eqref{eq:explicit-N-Bessel}) in terms of properties of the function $\Delta_p$ from \eqref{eq:Delta-p} for some suitable $p$.

\begin{proposition}\label{proposition}
Assume that there exist $p_0\in \N$ and $\epsilon_0$ such that
\begin{equation}
\label{Del_del_p}
\left\{
\begin{aligned}
\inf_{\real(s)\geq 0} \lvert  \Delta_{p_0}(s)\rvert & >\epsilon_0, \\
\left\lVert N-N_{p_0}\right\rVert_{L^1(0, \tau)} & <\epsilon_0,
\end{aligned}
\right.
\end{equation}
where $\Delta_{p_0}$ is defined from $N_{p_0}$ as in \eqref{eq:Delta-p}. Then
\begin{equation}
\inf_{\real(s)\geq 0} \lvert \Delta(s)\rvert>0.
\end{equation}
\end{proposition}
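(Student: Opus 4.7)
The plan is to treat the result as a perturbation statement: since $\Delta$ differs from $\Delta_{p_0}$ only through the integral kernel, and since $N$ is close to $N_{p_0}$ in $L^1$, the function $\Delta$ should inherit a uniform lower bound on $|\Delta|$ from the one assumed for $|\Delta_{p_0}|$, by nothing more than the reverse triangle inequality.

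Concretely, the first step is to observe that the definitions of $\Delta$ in \eqref{characteristic equation} and of $\Delta_{p_0}$ in \eqref{eq:Delta-p} yield, for every $s \in \mathbb{C}$,
\begin{equation*}
\Delta(s) - \Delta_{p_0}(s) = -\int_0^\tau \bigl(N(\nu) - N_{p_0}(\nu)\bigr)\,\mathrm{e}^{-s\nu}\,\diff\nu,
\end{equation*}
so the difference depends only on $N - N_{p_0}$ and not on the principal part $1 - q\rho \mathrm{e}^{-s\tau}$. The second step is to bound this difference uniformly on the closed right half-plane. For $\real(s) \geq 0$ and $\nu \in [0,\tau]$ one has $|\mathrm{e}^{-s\nu}| = \mathrm{e}^{-\nu \real(s)} \leq 1$, and therefore
\begin{equation*}
\sup_{\real(s)\geq 0} \bigl|\Delta(s) - \Delta_{p_0}(s)\bigr| \;\leq\; \int_0^\tau |N(\nu) - N_{p_0}(\nu)|\,\diff\nu \;=\; \lVert N - N_{p_0}\rVert_{L^1(0,\tau)} \;<\; \epsilon_0,
\end{equation*}
using the second hypothesis in \eqref{Del_del_p}.

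The third step is to combine this with the first hypothesis in \eqref{Del_del_p} via the reverse triangle inequality: for every $s$ with $\real(s)\geq 0$,
\begin{equation*}
|\Delta(s)| \;\geq\; |\Delta_{p_0}(s)| - |\Delta(s) - \Delta_{p_0}(s)|.
\end{equation*}
Taking the infimum over $\{\real(s)\geq 0\}$ on the right-hand side (where the first term is bounded below by a quantity strictly greater than $\epsilon_0$ and the second is bounded above by a quantity strictly less than $\epsilon_0$), one obtains
\begin{equation*}
\inf_{\real(s)\geq 0} |\Delta(s)| \;\geq\; \inf_{\real(s)\geq 0}|\Delta_{p_0}(s)| - \lVert N - N_{p_0}\rVert_{L^1(0,\tau)} \;>\; 0,
\end{equation*}
which is the desired conclusion.

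There is essentially no hard step here: the whole argument boils down to the triangle inequality together with the trivial pointwise bound $|\mathrm{e}^{-s\nu}|\leq 1$ for $\real(s)\geq 0$. The only point requiring mild attention is that the two inequalities in \eqref{Del_del_p} are strict, which is precisely what guarantees the strict positivity of the resulting lower bound; this is the reason the hypothesis is phrased with an intermediate constant $\epsilon_0$ rather than with a direct inequality between $\inf |\Delta_{p_0}|$ and $\lVert N - N_{p_0}\rVert_{L^1}$.
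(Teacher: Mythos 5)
Your proof is correct and follows exactly the same route as the paper: write $\Delta - \Delta_{p_0}$ as the integral against $N - N_{p_0}$, bound it by the $L^1$ norm using $\lvert \mathrm{e}^{-s\nu}\rvert \leq 1$ on the closed right half-plane, and conclude by the reverse triangle inequality together with the two strict inequalities in \eqref{Del_del_p}. Nothing is missing.
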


\begin{proof}
For all $s\in\C$, we have
\begin{equation}
\Delta(s)= \Delta_{p_0}(s)-\int_0^\tau \left(N(\nu)-N_{p_0}(\nu)\right) \mathrm{e}^{-s \nu} \diff \nu.
\end{equation} Thus,  for all $s\in\C$ with $\real(s) \geq 0$, we obtain
\begin{align}
\lvert  \Delta(s)\rvert&\geq\lvert \Delta_{p_0}(s)\rvert-\left\lvert \int_0^\tau \left(N(\nu)-N_{p_0}(\nu)\right) \mathrm{e}^{-s \nu} \diff \nu\right\rvert\\& \geq\lvert \Delta_{p_0}(s)\rvert- \left\lVert N-N_{p_0}\right\rVert_{L^1(0,\tau)}.
\end{align} Then, from \eqref{Del_del_p}, we obtain
\begin{equation}
\inf_{\real(s)\geq 0} \lvert  \Delta(s)\rvert>0,
\end{equation}
as required.
\end{proof}

\section{Conclusion}
\label{sec_conclusion}

In this paper, we presented a novel necessary and sufficient condition for $2\times 2$ first-order linear hyperbolic systems. First, the problem was reformulated as a stability problem for an associated integral difference equation. Then, we provided a result counting the number of unstable roots of the latter system, yielding, as a consequence, a necessary and sufficient stability criterion for the system of first-order linear hyperbolic partial differential equations. In future work, we plan to study non-scalar systems and investigate the stability of integral difference equations with multiple delays.

\bibliographystyle{abbrv} 
\bibliography{biblio}

\end{document}